\documentclass[11pt]{article}
\title{On the closure of cyclic subgroups of a free group in pro-$\Vp$ topologies}
\author{}

\usepackage[top=1.5in, bottom=1.5in, left=1in, right=1in]{geometry}
\usepackage{amsmath,amssymb,latexsym,amsthm,enumerate,color}
\usepackage{xypic}
\usepackage{xy}
\usepackage{amstext}
\usepackage{amsbsy}
\usepackage{amsopn}
\usepackage{amsfonts}
\usepackage{eepic}
\usepackage{graphicx}
\usepackage{epsf}
\usepackage{pstricks}

\numberwithin{equation}{section}

\newtheorem{theorem}{Theorem}[section]
\usepackage{graphicx}
\newtheorem{lemma}[theorem]{Lemma}
\newtheorem{proposition}[theorem]{Proposition}
\newtheorem{corollary}[theorem]{Corollary}

\theoremstyle{definition}

\def\Vp{{\rm \mathbf{V}}}

\def\P{\mathbb{P}}
\def\p{\varphi}

\def\N{\mathbb{N}}

\def\ab{{\bf Ab}}

\begin{document}
\author{Claude Marion, Pedro V. Silva, Gareth Tracey}
\maketitle

\begin{center}\small
2020 Mathematics Subject Classification: 20E05, 20E10, 20F10, 20F16

\bigskip

Keywords: free groups, extension-closed pseudovarieties, pro-nilpotent topology, cyclic subgroups
\end{center}

\abstract{We determine the closure of a cyclic subgroup $H$ of a free group for the pro-$\Vp$ topology when $\Vp$ is an extension-closed pseudovariety of finite groups. We show that $H$ is always closed for the pro-nilpotent topology and compute its closure for the pro-$\mathbf{G}_p$ and pro-$\mathbf{V}_p$ topologies, where $\mathbf{G}_p$ and $\mathbf{V}_p$ denote respectively the pseudovariety of finite $p$-groups and the pseudovariety of finite groups having a normal Sylow $p$-subgroup with quotient an abelian group of exponent dividing $p-1$. More generally, given any nonempty set $P$ of primes, we consider the pseudovariety $\mathbf{G}_P$ of all finite groups having order a product of primes in $P$.
}

\section{Introduction}

The concept of pseudovariety is central in the study of finite algebras. In particular, a {\em pseudovariety of finite groups} is a class of finite groups closed under taking subgroups, homomorphic images and finitary direct products. 
Every pseudovariety $\Vp$ of finite groups induces a (metrizable) topology on any group G, known as the \emph{pro-}$\Vp$ \emph{topology}. The pro-$\Vp$ topology is the initial topology with respect to all homomorphisms $G \to H \in \Vp$, where $H$ is endowed with the discrete topology. 
If ${\bf G}$ denotes the pseudovariety of all finite groups, the pro-${\bf G}$ topology is known as the profinite topology. It was introduced by Marshall Hall in \cite{Hal} and he proved in \cite[Theorem 5.1]{Hal2} that every finitely generated subgroup of a free group is closed for the profinite topology.

Over the years, several other pseudovarieties $\Vp$ were considered \cite{AS,MSW,MSTu,MSTm,MSTs,RZ}. In general, finitely generated subgroups of a free group are not necessarily closed for the pro-$\Vp$ topology. The goal becomes then deciding whether such a subgroup is closed, or even the membership problem for its closure.

In \cite{RZ}, Ribes and Zalesski\v\i \, answered these questions positively for the pseudovariety ${\bf G}_p$ of all finite $p$-groups, for an arbitrary prime $p$. In \cite{MSW}, Margolis, Sapir and Weil dealt successfully with the pseudovariety {\bf N} of all finite nilpotent groups 
(the case of free groups of infinite rank following from Corollary \ref{retres2}).
The pseudovariety {\bf S} of all finite solvable groups has resisted so far all attempts to solve these problems. 
This paper offers a contribution by proving that every cyclic subgroup of a free group is closed for the pro-{\bf N} topology and consequently for the pro-{\bf S} topology.

We note that cyclic subgroups of free groups were considered by Huang, Pawliuk, Sabok and Wise in \cite{HPSW}. 
Let $\mathbb{P}$ be the set of all primes. Given a subset $P$ of primes, let ${\bf G}_P$ denote the pseudovariety of all finite groups whose order is a product of primes in $P$. Note that $\mathbf{G}_{\mathbb{P}}$ is the pseudovariety $\mathbf{G}$ of all finite groups. Let $P^{\perp}$ be the subset of all primes not in $P$. In \cite[Theorem 1.5]{HPSW}, the authors show that if $P$ is a proper subset of primes, $H \leq F_n$ is cyclic and
$$u^p \in H \Rightarrow u \in H$$
holds for all $u \in F_n$ and $p \in P$, then $H$ is ${\bf G}_{P^{\perp}}$-closed
(the converse is proved in Corollary \ref{cycwp}(ii) below).
They also relate this property to the Hrushovsky property (extending
property for partial automorphisms) for hypertournaments.

In Section 2, after fixing some notation and giving some preliminary results, we show that when free groups are residually $\Vp$ we can assume that $F$ is a free group $F_n$ of finite rank $n$. 

In Section 3, we consider the case of extension-closed pseudovarieties $\Vp$. We determine the closure of a cyclic subgroup $H$ of a free group for the pro-$\Vp$ topology and show that it is effectively computable under mild decidability conditions. We show also that all cyclic subgroups being closed is equivalent to $\Vp$ containing all finite abelian groups. We then apply these results to the particular cases of {\bf S} and ${\bf G}_P$.

In Section 4, we use the computation of the ${\bf G}_p$-closure to show that cyclic subgroups are always closed for the pro-{\bf N} topology. This generalizes to any pseudovariety containing {\bf N}, in particular to the pseudovariety ${\bf Su}$ of all finite supersolvable groups. By a theorem of Auinger and Steinberg, ${\bf Su}$ is the join of the pseudovarieties $\Vp_p$ (finite groups having a normal Sylow $p$-subgroup with quotient an abelian group of exponent dividing $p-1$), for every prime $p$. By \cite[Theorem 1.1]{AS}, the pseudovarieties $\Vp_p$ are precisely the maximal {\em Hall subpseudovarieties} of {\bf Su} (see \cite[Subsection 3.5]{AS} for definitions). By \cite{AMV}, $\Vp_p$ is also the class of all finite groups having a faithful representation by upper triangular matrices over the field $\mathbb{F}_p$ of $p$ elements.
We close the paper by proving that the $\Vp_p$-closure of a cyclic subgroup of a free group is effectively computable.

\section{Preliminaries}
\label{prel}

In this paper, $\P$ denotes the set of all primes. Also if $m$ is a positive integer, then $C_m$ denotes the cyclic group of order $m$.

If $A$ is a set, we set $F_A$ to be the free group on $A$. Also if $H$ is a finitely generated subgroup of $F_A$, we write $H\leq_{f.g.} F_A$.
Let $F_n$ denote the free group of rank $n \in \N$ and fix a basis $A=\{a_1,\dots,a_n\}$ of $F_n$. 
We consider the $n$ homomorphisms $f_{a_i}: F_n\rightarrow \mathbb{Z}$ where $1\leq i \leq n$ defined by 
$$f_{a_i}(a_j)=\left\{
\begin{array}{ll}
1 & \textrm{if} \ j=i\\
0 & \textrm{otherwise}
\end{array}
\right.
$$   
for $1\leq j\leq n$.

We call a pseudovariety of finite groups {\em nontrivial} if it contains some nontrivial group. Besides the pseudovarieties defined in the Introduction, we consider also, for each $m \geq 1$, the pseudovariety $\ab(m)$ of all finite abelian groups of exponent dividing $m$. This pseudovariety is an {\em equational} pseudovariety because it is defined by the identities $[x,y] = 1$ and $x^m = 1$. This amounts to saying that a group $G$ is in $\ab(m)$ if and only if $[g,h] = g^m = 1$ for all $g,h \in G$. 

For all $m,n \geq 1$, write
$$F_n^{\ab(m)} = \langle [u,v],u^m \mid u,v \in F_n \rangle \unlhd F_n.$$
It is easy to check that 
$$F_n/F_n^{\ab(m)} \cong C_m^n$$
is the free object of $\ab(m)$ on the set $A$.

Following \cite{AS}, we may use the semidirect product of pseudovarieties to get the decomposition $\Vp_p = {\bf G}_p \ast \ab(p-1)$. This alternative description of $\Vp_p$ follows from the Kalu\v{z}nin-Krasner theorem \cite{Neu}.

A finite group is {\em supersolvable} if each of its chief factors is cyclic of prime order. 
By \cite[Corollary 2.7]{AS},  we have $$\mathbf{Su}=\bigvee_{p\in \mathbb{P}} \mathbf{V}_p.$$

We say that a pseudovariety $\Vp$ of finite groups is {\em extension-closed} if 
$$N,G/N \in \Vp \Rightarrow G \in \Vp$$
holds for every (finite) group $G$ and every $N \unlhd G$. 

It is well known that $\mathbf{S}$ is extension-closed, and it is obvious that $\mathbf{G}_P$ is extension-closed for every $P \subseteq \mathbb{P}$.

The smallest non nilpotent group is the symmetric group $S_3$ (which is not simple), hence $\mathbf{N}$ is not extension-closed. On the other hand, the alternating group $A_4$ is the smallest non supersolvable group, hence ${\bf Su}$ is not extension-closed either.

By considering the cyclic group $C_{m^2}$, we see that $\ab(m)$ is not extension-closed for every $m \geq 2$, and $C_{(p-1)^2}$ shows that $\mathbf{V}_p$ is not extension-closed either for every prime $p > 2$.

Given a pseudovariety $\mathbf{V}$ of finite groups, where we consider finite groups endowed with the discrete topology, the pro-$\mathbf{V}$ topology on a group $G$ is defined as the coarsest topology which makes all morphisms from $G$ into elements of $\mathbf{V}$ continuous. Equivalently, $G$ is a topological group where the normal subgroups $K$ of $G$ such that $G/K\in \mathbf{V}$ form a basis of neighbourhoods of the identity. 

For a topological property $\mathcal{P}$ and a subset $S$ of a group $G$, we say that $S$ is $\mathbf{V}$-$\mathcal{P}$ if $S$ has property $\mathcal{P}$ in the pro-$\mathbf{V}$ topology on $G$. 

Given a subgroup $H$ of a group $G$, the core $\textrm{Core}_G(H)$ of $H$ in $G$ is the largest normal subgroup of $G$ contained in $H$ and is equal to $\bigcap_{g\in G} g^{-1}Hg$.

\begin{theorem}
\label{sopen}
{\rm \cite[Proposition 1.2]{MSW}}
Given a group $G$ and $H \leq G$, the following conditions are equivalent:
\begin{itemize}
\item[(i)] $H$ is $\mathbf{V}$-open;
\item[(ii)] $H$ is $\mathbf{V}$-clopen;
\item[(iii)] $G/\mathrm{Core}_G(H) \in \mathbf{V}$.
\end{itemize}
\end{theorem}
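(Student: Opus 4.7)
The plan is to prove the cyclic chain $(iii) \Rightarrow (ii) \Rightarrow (i) \Rightarrow (iii)$, using the fact that the normal subgroups $K \unlhd G$ with $G/K \in \mathbf{V}$ form a fundamental system of neighbourhoods of the identity, together with the closure of $\mathbf{V}$ under taking homomorphic images.

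First I would dispatch the easy implication $(ii) \Rightarrow (i)$, which is immediate from the definitions. For $(iii) \Rightarrow (ii)$, assume $G/\mathrm{Core}_G(H) \in \mathbf{V}$. Then $\mathrm{Core}_G(H)$ itself is a basic open neighbourhood of the identity, so every coset $g\,\mathrm{Core}_G(H)$ is open. Since $\mathrm{Core}_G(H) \subseteq H$, we can write $H$ as the (finite) union of the cosets $h\,\mathrm{Core}_G(H)$ for $h$ running over a set of representatives of $H/\mathrm{Core}_G(H)$, and similarly $G \setminus H$ is a union of such cosets. Both sets are therefore open, giving $H$ clopen.

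For $(i) \Rightarrow (iii)$, suppose $H$ is $\mathbf{V}$-open. Since $1 \in H$, there exists a basic open neighbourhood of the identity contained in $H$, that is, a normal subgroup $N \unlhd G$ with $G/N \in \mathbf{V}$ and $N \subseteq H$. Because $N$ is normal in $G$ and sits inside $H$, it is contained in the largest normal subgroup of $G$ contained in $H$, namely $N \subseteq \mathrm{Core}_G(H)$. Consequently $G/\mathrm{Core}_G(H)$ is a homomorphic image of $G/N \in \mathbf{V}$, and since pseudovarieties are closed under homomorphic images we conclude $G/\mathrm{Core}_G(H) \in \mathbf{V}$.

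No real obstacle is anticipated; the only subtle point is making sure one invokes the pseudovariety axiom (closure under quotients) in the last step, since $\mathrm{Core}_G(H)$ may strictly contain the particular $N$ provided by openness. Everything else is a direct manipulation of cosets and the definition of the pro-$\mathbf{V}$ topology.
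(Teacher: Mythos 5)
Your proof is correct. The paper does not prove this statement itself --- it quotes it from \cite[Proposition 1.2]{MSW} --- and your argument is the standard one: $(iii)\Rightarrow(ii)$ via the coset decomposition with respect to the open normal subgroup $\mathrm{Core}_G(H)$, and $(i)\Rightarrow(iii)$ by extracting a basic neighbourhood $N\unlhd G$ of the identity inside $H$, noting $N\leq \mathrm{Core}_G(H)$, and invoking closure of $\mathbf{V}$ under quotients.
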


It is easy to prove (see \cite[Proposition 2.6]{MSTm}) the following:
\begin{equation}
\label{fioc}
\mbox{If $[G:H] < \infty$, then $H$ is $\mathbf{V}$-open if and only if $H$ is $\mathbf{V}$-closed.}
\end{equation}

We note that a subgroup $H$ of $G$ is $\mathbf{V}$-closed if and only if, for every $g \in G\setminus H$, there exists some $\Vp$-clopen $K \leq G$ such that $H \leq K$ and $g \notin K$.
On the other hand, a subgroup $H$ of $G$ is $\mathbf{V}$-dense if and only if $HN=G$ for every normal subgroup $N$ of $G$ such that $G/N\in \mathbf{V}$.

Given a group $G$ and $S \subseteq G$, we denote by  ${\rm{Cl}}_{\mathbf{V}}^G(S)$ the $\mathbf{V}$-closure of $S$ in $G$. We can omit the superscript if $G$ is clear from the context.

Note that if {\bf V} and {\bf W} are pseudovarieties of finite groups, then
\begin{equation}
\label{inccl}
{\bf W} \subseteq {\bf V} \Rightarrow {\rm{Cl}}_{\mathbf{V}}^G(H) \leq {\rm{Cl}}_{\mathbf{W}}^G(H)
\end{equation}
holds for every subgroup $H$ of a group $G$ by \cite[Corollary 3.2]{MSW}. 

A subgroup $K$ of a group $G$ is called a {\em retract} if there exists a (surjective) homomorphism $\p:G \to K$ fixing the elements of $K$. If $\Vp$ is a pseudovariety of finite groups, the group $G$ is said to be {\em residually} $\Vp$ if $\langle 1\rangle$ is a $\Vp$-closed subgroup of $G$. The following result is well known, but we include a proof for the sake of completeness:

\begin{lemma}
\label{rvec}
Every free group is residually $\Vp$ for every nontrivial extension-closed pseudovariety $\Vp$ of finite groups.
\end{lemma}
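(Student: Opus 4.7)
The plan is to reduce the statement to the classical theorem of Magnus that free groups are residually $p$-finite for every prime $p$. The key intermediate step is to show that any nontrivial extension-closed pseudovariety $\Vp$ contains the pseudovariety $\mathbf{G}_p$ for some prime $p$.

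First I would argue that $\mathbf{G}_p \subseteq \Vp$ for some prime $p$. Since $\Vp$ is nontrivial, it contains some nontrivial finite group $G_0$. Every nontrivial finite group has a subgroup of prime order, and $\Vp$ is closed under taking subgroups, so $C_p \in \Vp$ for some prime $p$. Every finite $p$-group $P$ admits a composition series $1=P_0\lhd P_1\lhd\cdots\lhd P_k=P$ with each factor $P_{i+1}/P_i \cong C_p$. By induction on $|P|$, using the extension-closedness of $\Vp$ applied to the short exact sequences $1\to P_i\to P_{i+1}\to C_p\to 1$, one obtains $P\in \Vp$. Hence $\mathbf{G}_p\subseteq \Vp$.

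Next I would invoke Magnus's theorem: for any prime $p$, every free group of finite rank is residually $p$-finite. Given a free group $F=F_A$ and $g\in F\setminus\{1\}$, the element $g$ involves only finitely many basis letters, so it lies in some finitely generated free subgroup $F_0\leq F$, which is a retract of $F$ via the map sending the remaining basis letters to $1$. Magnus's theorem supplies a homomorphism $\varphi_0\colon F_0\to P$ onto a finite $p$-group with $\varphi_0(g)\neq 1$; composing with the retraction gives a homomorphism $\varphi\colon F\to P$ with $\varphi(g)\neq 1$. Since $P\in \mathbf{G}_p\subseteq \Vp$, this separates $g$ from $1$ in the pro-$\Vp$ topology. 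As $g$ was arbitrary, $\{1\}$ is $\Vp$-closed, i.e., $F$ is residually $\Vp$.

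There is no real obstacle here; the statement is essentially a packaging of Magnus's residual $p$-finiteness theorem with the elementary observation on extension-closed pseudovarieties. The only mild care needed is the reduction from arbitrary (possibly uncountable) rank to finite rank via retractions, which is routine.
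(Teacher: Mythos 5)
Your proof is correct and takes essentially the same route as the paper's: first show $\mathbf{G}_p \subseteq \Vp$ via the composition-series argument for $p$-groups, then invoke the classical residual $p$-finiteness of free groups (the paper cites Takahasi rather than Magnus, and handles the passage from $\mathbf{G}_p$ to $\Vp$ via the containment of closures in (\ref{inccl})). Your explicit retraction argument for infinite rank is a harmless elaboration of what the cited theorem already covers.
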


\begin{proof}
Since $\Vp$ is nontrivial, then $C_p \in \Vp$ for some prime $p$. Since every finite $p$-group admits a composition series where the factor groups are cyclic of order $p$, it follows from $\Vp$ being extension-closed that ${\bf G}_p \subseteq \Vp$.

By  \cite[Theorem 6]{Tak}, 
every free group is residually  $\mathbf{G}_p$, and therefore residually $\Vp$ in view of (\ref{inccl}).
\end{proof}

Now we can prove the following:

\begin{proposition}
\label{retres}
Let $\Vp$ be a pseudovariety of finite groups. Let $G$ be a residually $\Vp$ group and let $K \leq G$ be a retract of $G$. Let $H \leq K$. Then ${\rm Cl}_{\mathbf{V}}^G(H)={\rm Cl}_{\mathbf{V}}^K(H)$. 
\end{proposition}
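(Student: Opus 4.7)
My plan is to establish two facts separately and combine them: (i) $K$ is $\Vp$-closed in $G$; and (ii) the pro-$\Vp$ topology on $K$ coincides with the subspace topology inherited from $G$. Granted these, since $H\subseteq K$ and $K$ is $\Vp$-closed, we get ${\rm Cl}_{\mathbf{V}}^G(H)\subseteq K$; and by (ii) this closure equals the closure of $H$ in $K$ relative to the intrinsic pro-$\Vp$ topology on $K$, namely ${\rm Cl}_{\mathbf{V}}^K(H)$.

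For (i), let $\varphi\colon G\to K$ be the retraction. For $g\in G\setminus K$ we have $\varphi(g)g^{-1}\neq 1$, so residual $\Vp$-ness of $G$ yields a homomorphism $\pi\colon G\to V$ with $V\in\Vp$ and $\pi(\varphi(g))\neq\pi(g)$. Define $\tilde\pi\colon G\to V\times V$ by $\tilde\pi(x)=(\pi(\varphi(x)),\pi(x))$; this is a homomorphism into a group of $\Vp$ (since $\Vp$ is closed under direct products). Because $\varphi$ fixes $K$ pointwise, $\tilde\pi$ sends all of $K$ into the diagonal of $V\times V$, whereas $\tilde\pi(g)$ sits off the diagonal. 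The preimage under $\tilde\pi$ of the (open) complement of the diagonal is therefore a $\Vp$-open neighbourhood of $g$ disjoint from $K$, so $g\notin\overline{K}$.

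For (ii), the retract hypothesis is precisely the ingredient we need. Any homomorphism $\psi\colon K\to V\in\Vp$ lifts to $\psi\circ\varphi\colon G\to V$, and since $\varphi|_K={\rm id}_K$ we have $\psi^{-1}(v)=(\psi\circ\varphi)^{-1}(v)\cap K$ for every $v\in V$. Hence every basic open of the intrinsic pro-$\Vp$ topology on $K$ is the trace on $K$ of an open set in $G$, which gives the non-trivial inclusion of topologies. The reverse inclusion is immediate, since restricting a homomorphism $G\to V\in\Vp$ to $K$ yields a homomorphism $K\to V$.

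The potentially confusing point, and the nearest thing to an obstacle, is that the pro-$\Vp$ topology on a subgroup typically does \emph{not} agree with the subspace topology, and this discrepancy is exactly what makes ambient closure computations interesting in general. The retract assumption is the ingredient that closes this gap by allowing $\Vp$-quotients of $K$ to be lifted to $\Vp$-quotients of $G$, while residual $\Vp$-ness of $G$ is used only in (i) to secure the separation in the diagonal argument.
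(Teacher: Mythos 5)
Your proof is correct and follows essentially the same route as the paper, which decomposes the argument into the same two ingredients — that the pro-$\Vp$ topology on the retract $K$ coincides with the subspace topology, and that a retract of a residually $\Vp$ group is $\Vp$-closed — except that the paper simply cites \cite[Proposition 1.6 and Corollary 1.8]{MSW} for these facts, whereas you prove both from scratch (the diagonal trick in $V\times V$ for closedness, and lifting $\psi$ to $\psi\circ\varphi$ for the topology comparison), and both of your arguments are sound.
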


\begin{proof}
Write $C_1 = {\rm Cl}_{\mathbf{V}}^G(H)$ and $C_2 = {\rm Cl}_{\mathbf{V}}^K(H)$. By \cite[Proposition 1.6]{MSW}, the pro-$\Vp$ topology of $K$ coincides with the subspace topology of $K$ with respect to the pro-$\Vp$ topology of $G$. Hence $C_2 = C_1\cap K$. Since $G$ is residually $\Vp$, the retract $K$ is $\Vp$-closed in $G$ by \cite[Corollary 1.8]{MSW}. Hence $C_1=C_1\cap  K=C_2$.
\end{proof}

\begin{corollary}
\label{retres2}
Let $H$ be a finitely generated subgroup of a free group $F_A$ of arbitrary rank. Let $B \subseteq A$ be a finite set such that $H \leq F_B$. Let $\mathbf{V}$ be a pseudovariety of finite groups for which $F_A$ is residually $\mathbf{V}$. Then:
\begin{itemize}
\item[(i)] ${\rm Cl}_{\mathbf{V}}^{F_A}(H)={\rm Cl}_{\mathbf{V}}^{F_B}(H)$;
\item[(ii)] $H$ is $\mathbf{V}$-closed in $F_A$ if and only if  $H$ is $\mathbf{V}$-closed in $F_B$;
\item[(iii)] if $A$ is infinite then $H$ is not $\mathbf{V}$-dense in $F_A$. 
\end{itemize}
This applies in particular to 
{\bf N}, {\bf Su}, $\mathbf{V}_p$ (for every prime $p$) and every nontrivial extension-closed pseudovariety (including 
{\bf S} and $\mathbf{G}_P$ (for every nonempty $P \subseteq \mathbb{P}$)).
\end{corollary}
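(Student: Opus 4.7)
The plan is to derive all three assertions from Proposition~\ref{retres} applied to the pair $F_B \leq F_A$, using the crucial observation that $F_B$ is a retract of $F_A$. Indeed, the homomorphism $\rho\colon F_A\to F_B$ determined on generators by $\rho(a)=a$ for $a\in B$ and $\rho(a)=1$ for $a\in A\setminus B$ fixes $F_B$ pointwise. Since $F_A$ is residually $\Vp$ by hypothesis and $H\leq F_B$, Proposition~\ref{retres} applies directly and yields (i). Assertion (ii) then follows at once, because a subgroup is $\Vp$-closed exactly when it coincides with its $\Vp$-closure, and by (i) the closure is the same computed in $F_A$ or in $F_B$.

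For (iii), assume $A$ is infinite. Since $B$ is finite, pick any $a\in A\setminus B$. As $F_A$ is residually $\Vp$, there exists a morphism $\varphi\colon F_A\to W$ with $W\in\Vp$ and $\varphi(a)\neq 1$. Define $\varphi'\colon F_A\to W$ by setting $\varphi'(a)=\varphi(a)$ and $\varphi'(x)=1$ for every $x\in A\setminus\{a\}$; this is a well-defined homomorphism since $F_A$ is free on $A$. Then $\varphi'(H)=\{1\}$, because $H\leq F_B$ and $a\notin B$, while $\varphi'(a)\neq 1$. Hence $H\cdot\ker\varphi'$ is a proper subgroup of $F_A$, which shows that $H$ fails to be $\Vp$-dense.

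It remains to verify that $F_A$ is residually $\Vp$ in each case appearing in the final assertion. For every nontrivial extension-closed pseudovariety, and in particular for $\mathbf{S}$ and for $\mathbf{G}_P$ with $\emptyset\neq P\subseteq \mathbb{P}$, this is precisely Lemma~\ref{rvec}. The remaining pseudovarieties $\mathbf{N}$, $\mathbf{Su}$ and $\mathbf{V}_p$ each contain some $\mathbf{G}_p$, so residual $\mathbf{G}_p$-ness (Takahasi's theorem \cite[Theorem~6]{Tak}) combined with~(\ref{inccl}) applied to the trivial subgroup gives residual $\Vp$-ness. There is no genuine obstacle in the argument: once the retract $\rho$ is identified, Proposition~\ref{retres} does all the work in (i) and (ii), and (iii) is a standard separation argument exploiting the hypothesis of residual $\Vp$-ness to kill a generator outside $B$.
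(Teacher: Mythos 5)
Your proof is correct and follows essentially the same route as the paper: (i) is Proposition~\ref{retres} applied to the retract $F_B$ of $F_A$ (with exactly the retraction you describe), (ii) is immediate from (i), and the final claim is handled exactly as in the paper via Lemma~\ref{rvec} for the extension-closed cases and (\ref{inccl}) for the pseudovarieties containing some $\mathbf{G}_p$. The only (harmless) divergence is in (iii): the paper deduces non-denseness directly from (i), since ${\rm Cl}_{\mathbf{V}}^{F_A}(H)={\rm Cl}_{\mathbf{V}}^{F_B}(H)\leq F_B\subsetneq F_A$, whereas you give an explicit separating homomorphism killing a generator outside $B$; both arguments are valid.
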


\begin{proof}
(i) follows from Proposition \ref{retres}, since $F_B$ is a retract of $F_A$. Now (ii) and (iii) follow from (i).

By Lemma \ref{rvec}, this applies to every nontrivial extension-closed pseudovariety. Since the remaining pseudovarieties contain $\mathbf{G}_p$ for some prime $p$, it follows from (\ref{inccl}) that they inherit the same property.
\end{proof}

As a consequence, we may always assume that free groups have finite rank in our proofs.

\section{Extension-closed pseudovarieties}

We discuss in this section the extension-closed case.
The following result combines some of the special properties shared by extension-closed pseudovarieties:

\begin{theorem}
\label{ecp}
{\rm \cite[Proposition 2.17]{MSW}, \cite[Corollary 3.3 and Proposition 3.4]{RZ}}
Let $\Vp$ be a nontrivial extension-closed pseudovariety of finite groups and let $F$ be a free group. Let $H \leq_{f.g.} F$. Then:
\begin{itemize}
\item[(i)]
$H$ is $\Vp$-closed if and only if $H$ is a free factor of a $\Vp$-clopen subgroup of $F$;
\item[(ii)]
the rank of ${\rm{Cl}}_{\mathbf{V}}(H)$ is at most the rank of $H$;
\item[(iii)]
the pro-$\mathbf{V}$ topology on ${\rm{Cl}}_{\mathbf{V}}(H)$ is the subspace topology with respect to the pro-$\mathbf{V}$ topology on $F$. 
\end{itemize}
\end{theorem}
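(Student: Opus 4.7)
The plan is to treat (i)--(iii) in sequence, combining Lemma \ref{rvec}, Proposition \ref{retres}, and the Stallings-graph machinery underlying the Ribes--Zalesski\v\i\ approach.

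For the ``if'' direction of (i), assume $H$ is a free factor of a $\Vp$-clopen subgroup $K$ of $F$, so that a retraction $K \to H$ exists. Subgroups of residually $\Vp$ groups are residually $\Vp$, hence $K$ is residually $\Vp$ by Lemma \ref{rvec}. Applying Proposition \ref{retres} inside $K$, with $H$ playing the role of the retract, yields ${\rm Cl}_\Vp^K(H) = H$, so $H$ is $\Vp$-closed in $K$. Since \cite[Proposition 1.6]{MSW} identifies the pro-$\Vp$ topology on the clopen subgroup $K$ with the subspace topology from $F$, and $K$ is itself $\Vp$-closed in $F$, it follows that $H$ is $\Vp$-closed in $F$.

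For the converse of (i) together with (ii), I would work with the Stallings graph $\Gamma(H)$ of $H$, viewed as a finite core inverse automaton over $A$. The idea is to realise ${\rm Cl}_\Vp(H)$ by a finite sequence of vertex identifications on $\Gamma(H)$: whenever a finite $\Vp$-quotient $F \to Q$ forces two vertices of $\Gamma(H)$ to represent the same coset, extension-closure allows one to lift this identification while remaining inside a $\Vp$-clopen approximation. Because only vertices are merged, no new petals are introduced, so the rank is nonincreasing, which yields (ii). Upon termination, ${\rm Cl}_\Vp(H)$ sits inside the Schreier graph of a finite-index $K \leq F$ with $F/\Core_F(K) \in \Vp$, and appears as a free factor of $K$, giving the converse of (i).

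Part (iii) then follows by applying (i) to ${\rm Cl}_\Vp(H)$ itself, which is $\Vp$-closed by construction and finitely generated by (ii): it is a free factor of some $\Vp$-clopen $K \leq F$, hence a retract of $K$. The pro-$\Vp$ topology on $K$ is the subspace topology from $F$ by \cite[Proposition 1.6]{MSW}, and a parallel argument transfers this identification from $K$ to its retract ${\rm Cl}_\Vp(H)$. The main obstacle is the converse of (i) together with the rank bound in (ii): one must argue that a purely vertex-folding construction suffices, and this is precisely the point at which extension-closedness of $\Vp$ becomes essential, since it permits the successive $\Vp$-quotient witnesses to be amalgamated without exiting the pseudovariety, ensuring the final graph still describes a free factor of a $\Vp$-clopen subgroup of $F$.
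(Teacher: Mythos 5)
The paper does not prove this theorem: it is quoted from \cite[Proposition 2.17]{MSW} and \cite[Corollary 3.3 and Proposition 3.4]{RZ}, so your attempt has to be judged on its own merits. The ``if'' direction of (i) is essentially sound: a free factor is a retract, so Proposition \ref{retres} (applied inside $K$, which is residually $\Vp$ as a subgroup of $F$) gives that $H$ is closed in $K$, and closedness in $F$ follows once one knows the pro-$\Vp$ topology of the clopen subgroup $K$ is the subspace topology. But note that \cite[Proposition 1.6]{MSW}, as invoked in this paper, concerns \emph{retracts}, and a $\Vp$-clopen subgroup of $F$ is finite-index, hence generally of larger rank than $F$ and not a retract; the subspace-topology statement for clopen subgroups is a separate fact whose proof is exactly where extension-closedness enters, so you cannot get it for free from the retract case.

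The genuine gap is in your treatment of (ii) and the converse of (i). The claim ``because only vertices are merged, no new petals are introduced, so the rank is nonincreasing'' is false: identifying two distinct vertices of a connected graph with $V$ vertices and $E$ edges raises $E-V+1$ by one, and the Takahasi overgroups of $H$ obtained by identifying vertices of its Stallings graph and folding can have rank strictly \emph{larger} than that of $H$ (e.g.\ quotients of the Stallings graph of $\langle aba^{-1}b^{-1}\rangle \leq F_2$ reach rank $2$). So the vertex-folding picture, while it does locate ${\rm Cl}_{\Vp}(H)$ among finitely many candidate overgroups, cannot by itself deliver the rank bound. The standard proof of (ii) is different in kind: one first establishes (iii) for $L={\rm Cl}_{\Vp}(H)$, so that $H$ is dense in $L$ for the intrinsic pro-$\Vp$ topology of $L$; choosing a prime $p$ with $C_p\in\Vp$ (Lemma \ref{rvec} guarantees one), density forces $H$ to surject onto $L/[L,L]L^p\cong C_p^{{\rm rank}(L)}$, whence ${\rm rank}(L)\leq{\rm rank}(H)$. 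Likewise, the converse of (i) is not just ``termination of a folding procedure'': one needs Takahasi's finiteness of overgroups (or the Ribes--Zalesski\v\i{} completion of the Stallings graph to a coset graph of a $\Vp$-clopen subgroup, together with M.~Hall's free-factor argument), and the completion step again uses extension-closedness in an essential, and unproved, way in your sketch. As written, (ii), the converse of (i), and hence also (iii) (which depends on both) are not established.
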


We define  the {\em root} and the  {\em exponent} (in $F$) of a word $w\in F\setminus \{1\}$ as follows. Clearly, $w$ can be written as a power $u^r$ for some $u \in F$ and $r \geq 1$, and there exist finitely many such decompositions. The maximum such $r$ is the exponent of $w$ and is denoted by ${\rm{exp}}(w)$. Then the root of $w$ is defined through the equality $w = {\rm{root}}(w)^{{\rm{exp}}(w)}$. A classical theorem states that two nonempty words $x,y \in F$ commute if and only if $x,y$ are powers of a same word if and only if $x^m = y^k$ for some nonzero $m,k \in \mathbb{Z}$. It follows that, for all $w\in F\setminus \{1\}$ and $s\in \mathbb{Z}\setminus\{0\}$,

\begin{equation}
\label{pow}
\mbox{$w = v^s$ implies $v = ({\rm{root}}(w))^k$ for some $k \in \mathbb{Z} \setminus \{ 0\}$.}
\end{equation}

From the same theorem we may deduce that, for every $s\in \mathbb{Z}\setminus\{0\}$,
\begin{equation}
\label{pow1}
\mbox{
${\rm{root}}(w^s)$ = $\left\{
\begin{array}{ll}
{\rm{root}}(w) & \textrm{if} \ s> 0\\
{\rm{root}}(w)^{-1} & \textrm{if} \ s< 0
\end{array}
\right.
$
 and ${\rm{exp}}(w^s) = |s|{\rm{exp}}(w)$.}
\end{equation}

Indeed, since 
$$({\rm{root}}(w))^{s{\rm{exp}}(w)} = w^s = ({\rm{root}}(w^s))^{{\rm{exp}}(w^s)},$$
it follows that ${\rm{root}}(w)$ and ${\rm{root}}(w^s)$ are powers of a same word and so by maximality of exponents ${\rm{root}}(w^s) = {\rm{root}}(w)^{\pm 1}$: Moreover, ${\rm{root}}(w^s)={\rm{root}}(w)$ if and only if $s>0$. Hence
$$({\rm{root}}(w))^{s{\rm{exp}}(w)} =\left\{
\begin{array}{ll}
 ({\rm{root}}(w))^{{\rm{exp}}(w^s)} & \textrm{if} \ s>0\\
 ({\rm{root}}(w)^{-1})^{{\rm{exp}}(w^s)} & \textrm{if} \ s<0
 \end{array}
 \right.
 $$
and so (\ref{pow1}) holds. In \cite{BF} Berlai and Ferov generalize the concepts of root and exponent to right-angled Artin groups.

\begin{theorem}
\label{eccy}
Let $\Vp$ be a nontrivial extension-closed pseudovariety of finite groups and let $F$ be a free group. Let $w \in F\setminus \{ 1\}$, $u = {\rm{root}}(w)$ and $e = {\rm exp}(w)$. Then:
\begin{itemize}
\item[(i)]
${\rm{Cl}}_{\mathbf{V}}(\langle 1\rangle) = \langle 1\rangle$;
\item[(ii)]
${\rm{Cl}}_{\mathbf{V}}(\langle w\rangle) \leq \langle u\rangle$;
\item[(iii)]
$\langle w\rangle$ is $\Vp$-closed if and only if $C_e \in \Vp$;
\item[(iv)]
${\rm{Cl}}_{\mathbf{V}}(\langle w\rangle) = \langle u^m\rangle$ for 
$$m = \max\{ k \geq 1 \mid k \mbox{ divides $e$ and }C_k \in \Vp\}.$$
\item[(v)]
if we can decide, for each $n \geq 2$, whether or not $C_n \in \Vp$, then ${\rm{Cl}}_{\mathbf{V}}(\langle w\rangle)$ is effectively computable.
\end{itemize}
\end{theorem}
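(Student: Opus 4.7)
The plan is to prove the five parts in the order (i), (ii), key lemma, (iv), (iii), (v), with the bulk of the work going into a clean reduction from closures in $F$ to closures in $\mathbb{Z}$.

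Part (i) is immediate: by Lemma \ref{rvec}, $F$ is residually $\mathbf{V}$, so $\langle 1 \rangle$ is $\mathbf{V}$-closed. For part (ii), Theorem \ref{ecp}(ii) gives that ${\rm Cl}_{\mathbf{V}}(\langle w \rangle)$ has rank at most one; since $w \neq 1$ the closure is nontrivial, hence cyclic of the form $\langle v \rangle$. From $w \in \langle v \rangle$ I get $w = v^s$ for some nonzero integer $s$, and (\ref{pow}) then forces $v$ to be a power of $u = {\rm root}(w)$, so ${\rm Cl}_{\mathbf{V}}(\langle w \rangle) \leq \langle u \rangle$.

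The key step is the following lemma: \emph{for every $k \geq 1$, the subgroup $\langle u^k \rangle$ is $\mathbf{V}$-closed in $F$ if and only if $C_k \in \mathbf{V}$}. To prove it, I would first apply (ii) to $u$ in place of $w$ (using ${\rm root}(u) = u$) to conclude that $\langle u \rangle$ is itself $\mathbf{V}$-closed. Theorem \ref{ecp}(iii) then says that the pro-$\mathbf{V}$ topology on $\langle u \rangle$ coincides with the subspace topology from $F$; combined with $\langle u \rangle$ being closed, this means closedness of $\langle u^k \rangle$ in $F$ is equivalent to closedness of $k\mathbb{Z}$ in the intrinsic pro-$\mathbf{V}$ topology on $\langle u \rangle \cong \mathbb{Z}$. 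Since $k\mathbb{Z}$ has finite index in $\mathbb{Z}$, (\ref{fioc}) lets me replace ``closed'' by ``open'', and Theorem \ref{sopen} together with ${\rm Core}_{\mathbb{Z}}(k\mathbb{Z}) = k\mathbb{Z}$ (by abelianity) then identifies this with $\mathbb{Z}/k\mathbb{Z} = C_k$ lying in $\mathbf{V}$.

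Granted the lemma, (iv) drops out quickly. Part (ii) writes ${\rm Cl}_{\mathbf{V}}(\langle w \rangle) = \langle u^k \rangle$ for some $k \mid e$; the lemma forces $C_k \in \mathbf{V}$, hence $k \leq m$ by definition of $m$. Conversely $\langle u^m \rangle$ is $\mathbf{V}$-closed (again by the lemma) and contains $\langle w \rangle$, so ${\rm Cl}_{\mathbf{V}}(\langle w \rangle) \leq \langle u^m \rangle$, i.e.\ $m \mid k$. These two together force $k = m$. Part (iii) is the specialization $k = e$ of the lemma. Finally, (v) is algorithmic: $u$ and $e$ are computable from $w$ by standard manipulations in a free group (cyclically reducing and extracting a primitive period), and then one enumerates the finitely many divisors of $e$, tests each against the assumed oracle for ``$C_n \in \mathbf{V}$'', and outputs $u^m$ for the largest divisor $m$ passing the test. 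I expect no substantive obstacles; the reduction to $\mathbb{Z}$ via Theorem \ref{ecp}(iii) does all the real work, and everything else is bookkeeping.
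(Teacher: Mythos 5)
Your proposal is correct and follows essentially the same route as the paper: part (ii) via Theorem \ref{ecp}(ii) and (\ref{pow}), the closedness criterion via $\langle u\rangle$ being closed, Theorem \ref{ecp}(iii), (\ref{fioc}) and Theorem \ref{sopen}, and the divisor-maximality argument for (iv). Your ``key lemma'' is just part (iii) stated for arbitrary powers $u^k$, which is exactly how the paper applies (iii) inside its proof of (iv), so the reorganization is cosmetic.
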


\begin{proof}
(i) By Lemma \ref{rvec}.

(ii) Write $C = {\rm{Cl}}_{\mathbf{V}}(\langle w\rangle)$. By Theorem \ref{ecp}(ii), we have $C = \langle v \rangle$ for some $v \in F$. Hence $w = v^r$ for some $r \in \mathbb{Z}$ and so $v \in \langle u\rangle$ by (\ref{pow}). Thus $C \leq \langle u\rangle$. 

(iii) Since ${\rm{root}}(u) = {\rm{root}}(u^e) = u$ by (\ref{pow1}), it follows from part (ii) that $\langle u\rangle$ is $\Vp$-closed. By Theorem \ref{ecp}(iii), $\langle w\rangle$ is $\Vp$-closed in $F$ if and only if it is $\Vp$-closed in $\langle u\rangle$. Since $\langle w\rangle$ is a finite index (normal) subgroup of $\langle u\rangle$, it follows from Theorem \ref{sopen} and (\ref{fioc}) that $\langle w\rangle$ is $\Vp$-closed in $F$ if and only if 
$\langle u\rangle/\langle w\rangle \in \Vp$. Since $\langle u\rangle/\langle w\rangle = \langle u\rangle/\langle u^e\rangle \cong C_e$, we get the desired equivalence.

(iv) By parts (ii) and (iii), ${\rm{Cl}}_{\mathbf{V}}(\langle w\rangle) = \langle u^m\rangle$ for some $m \geq 1$ such that  $C_m \in \Vp$. Since $w = u^e \in \langle u^m\rangle$, then $m$ divides $e$. Suppose that $k \geq 1$ divides $e$ and $C_k \in \Vp$. By part (iii), $\langle u^k\rangle$ is $\Vp$-closed, and we have $w = u^e \in \langle u^k\rangle$. Hence $\langle w\rangle \leq \langle u^k\rangle$ and by minimality of the closure we get $\langle u^m\rangle \leq \langle u^k\rangle$. Thus $k$ divides $m$ and so $k \leq m$ as required.

(v) It follows immediately from part (iv).
\end{proof}

\begin{corollary}
\label{eccyc}
Let $\Vp$ be an extension-closed pseudovariety of finite groups. Then the following conditions are equivalent: 
\begin{itemize}
\item[(i)]
every cyclic subgroup of a free group is $\Vp$-closed;
\item[(ii)]
$\ab \subseteq \Vp$.
\end{itemize}
\end{corollary}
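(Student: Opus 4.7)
The plan is to prove the two implications separately, with Theorem \ref{eccy}(iii) doing the heavy lifting in both directions. In essence, that theorem reduces the $\Vp$-closedness of a cyclic subgroup of a free group to the question of which cyclic groups lie in $\Vp$, which is where the condition $\ab \subseteq \Vp$ naturally enters.

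For the direction (ii) $\Rightarrow$ (i), I would argue as follows. If $\ab \subseteq \Vp$ then $\Vp$ is nontrivial and every $C_n$ lies in $\Vp$. Given a cyclic subgroup $H = \langle w \rangle$ of a free group $F$, if $w = 1$ then $H$ is $\Vp$-closed by Theorem \ref{eccy}(i); otherwise, with $e = {\rm exp}(w)$, the containment $C_e \in \Vp$ combines with Theorem \ref{eccy}(iii) to give that $H$ is $\Vp$-closed.

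For the converse (i) $\Rightarrow$ (ii), I would specialize to the free group of rank one, $F_1 = \langle a\rangle$. For each $n \geq 1$, hypothesis (i) guarantees that $\langle a^n\rangle$ is $\Vp$-closed in $F_1$, and Theorem \ref{eccy}(iii) applied with $w = a^n$ (so ${\rm root}(w) = a$ and ${\rm exp}(w) = n$) forces $C_n \in \Vp$. Since every finite abelian group is a finite direct product of finite cyclic groups, closure of $\Vp$ under subgroups and finite direct products then yields $\ab \subseteq \Vp$.

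The only preliminary care concerns the trivial pseudovariety: if $\Vp = \langle 1\rangle$, then (ii) manifestly fails, while (i) also fails, since the pro-$\Vp$ topology on $F_1$ is indiscrete and so the proper subgroup $\langle a^2\rangle$ is not $\Vp$-closed; one may therefore assume $\Vp$ is nontrivial before invoking Theorem \ref{eccy}(iii). With this case dispatched, there is no genuine obstacle, as Theorem \ref{eccy}(iii) already isolates the relevant cyclic-group criterion and the reduction to $F_1$ makes the forward direction equally immediate.
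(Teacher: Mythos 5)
Your proof is correct and takes essentially the same route as the paper's: Theorem \ref{eccy}(iii) applied in both directions, together with the decomposition of finite abelian groups into cyclic factors. Your explicit dispatch of the trivial pseudovariety (where Theorem \ref{eccy} does not apply) is a small extra precaution that the paper's proof leaves implicit, but it does not change the argument.
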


\begin{proof}
(i) $\Rightarrow$ (ii). In view of the fundamental theorems of finite abelian groups, it suffices to show that $C_n \in \Vp$  for every $n \geq 2$. Let $u$ be a primitive word of $F$. Then ${\rm{root}}(u) = u$ and ${\rm exp}(u^n) = n$ by (\ref{pow1}). Since $\langle u^n \rangle$ is $\Vp$-closed, it follows from Theorem \ref{eccy}(iii) that $C_n \in \Vp$ and so $\ab \subseteq \Vp$.

(ii) $\Rightarrow$ (i). By Theorem \ref{eccy}(i) and (iii).
\end{proof}

Since it is well known that the pseudovariety ${\bf S}$ of all finite solvable groups is extension-closed, Corollary \ref{eccyc} and Theorem \ref{ecp}(i) yield:

\begin{corollary}
\label{t:cyclics}
Let $H$ be a cyclic subgroup of a free group $F$. Then:
\begin{itemize}
\item[(i)] $H$ is $\mathbf{S}$-closed;
\item[(ii)] $H$ is a free factor of an $\mathbf{S}$-clopen subgroup of $F$. 
\end{itemize}
\end{corollary}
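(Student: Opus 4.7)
The plan is to invoke the two cited results in sequence after pinning down the relevant properties of $\mathbf{S}$. The pseudovariety $\mathbf{S}$ of finite solvable groups is explicitly noted in the paper to be extension-closed, and of course every finite abelian group is solvable, so $\mathbf{Ab} \subseteq \mathbf{S}$. These are the only two facts about $\mathbf{S}$ I will need.

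For part (i), I would apply Corollary \ref{eccyc} directly: since $\mathbf{S}$ is extension-closed and contains $\mathbf{Ab}$, the implication (ii)$\Rightarrow$(i) of that corollary yields that every cyclic subgroup of a free group is $\mathbf{S}$-closed. In particular, the given $H$ is $\mathbf{S}$-closed. (If one wants to be explicit, the cases $H = \langle 1 \rangle$ and $H = \langle w \rangle$ with $w \neq 1$ are handled by Theorem \ref{eccy}(i) and Theorem \ref{eccy}(iii) respectively, using $C_e \in \mathbf{S}$ for every $e \geq 1$.)

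For part (ii), I would then feed the conclusion of (i) into Theorem \ref{ecp}(i). A cyclic subgroup is finitely generated, so the hypothesis $H \leq_{f.g.} F$ of that theorem is satisfied, and the $\mathbf{S}$-closedness obtained in (i) is exactly what is needed to conclude that $H$ is a free factor of some $\mathbf{S}$-clopen subgroup of $F$.

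There is no real obstacle here; the whole content is in Corollary \ref{eccyc} and Theorem \ref{ecp}(i), and the role of this corollary is simply to record the specialization to $\mathbf{V} = \mathbf{S}$.
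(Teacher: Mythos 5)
Your proof is correct and matches the paper's own argument exactly: the paper also derives (i) from Corollary \ref{eccyc} using that $\mathbf{S}$ is extension-closed and contains all finite abelian groups, and (ii) from Theorem \ref{ecp}(i). Nothing is missing.
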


Given $P \subseteq \mathbb{P}$ nonempty and $k \geq 1$, let
$\nu_P(k)$ be the largest divisor of $k$ which is a product of primes in $P$.
Since $\mathbf{G}_P$ is extension-closed, we can now deduce the following result:

\begin{corollary}
\label{cycwp}
Let $P \subseteq \mathbb{P}$ be nonempty and let $F$ be a free group. Let $w \in F\setminus \{ 1\}$, $u = {\rm{root}}(w)$ and $e = {\rm exp}(w)$. Then:
\begin{itemize}
\item[(i)]
${\rm{Cl}}_{\mathbf{G}_P}(\langle 1\rangle) = \langle 1\rangle$;
\item[(ii)]
$\langle w\rangle$ is $\mathbf{G}_P$-closed if and only if $e$ is a product of primes in $P$
if and only if
\begin{equation}
\label{wetal}
v^p \in \langle w\rangle \Rightarrow v \in \langle w\rangle
\end{equation}
holds for all $v \in F$ and $p \in P^{\perp}$;
\item[(iii)]
${\rm{Cl}}_{\mathbf{G}_P}(\langle w\rangle) = \langle u^{\nu_P(e)}\rangle$. 
\end{itemize}
\end{corollary}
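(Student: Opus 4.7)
The plan is to derive Corollary \ref{cycwp} almost entirely from Theorem \ref{eccy}, which already handles any nontrivial extension-closed pseudovariety. Since $\mathbf{G}_P$ is extension-closed (noted in Section~\ref{prel}) and nontrivial (as $P$ is nonempty), Theorem \ref{eccy} applies verbatim, and part (i) is an immediate instance of Theorem \ref{eccy}(i).

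For the first equivalence in (ii), I would apply Theorem \ref{eccy}(iii) and observe that $C_e \in \mathbf{G}_P$ precisely when $|C_e|=e$ is a product of primes in $P$, directly from the definition of $\mathbf{G}_P$. Part (iii) is handled analogously: Theorem \ref{eccy}(iv) gives ${\rm{Cl}}_{\mathbf{G}_P}(\langle w\rangle)=\langle u^m\rangle$ with $m$ the largest $k\geq 1$ dividing $e$ such that $C_k\in\mathbf{G}_P$. The $k$ in question are exactly the divisors of $e$ that are products of primes in $P$, whose maximum is $\nu_P(e)$ by definition; hence $m=\nu_P(e)$.

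The only step demanding genuine work is the second equivalence in (ii). For the contrapositive of $(\Leftarrow)$, if some prime $p\in P^{\perp}$ divides $e$, I would take $v=u^{e/p}$: then $v^p=u^e=w\in\langle w\rangle$, while $v\notin\langle u^e\rangle=\langle w\rangle$ because $e\nmid e/p$, violating (\ref{wetal}). For $(\Rightarrow)$, assume $e$ is a product of primes in $P$, let $p\in P^{\perp}$, and suppose $v^p\in\langle w\rangle$. The case $v=1$ is trivial, so I may write $v^p=w^n$ with $n\neq 0$, whence $v$ and $w$ commute and (\ref{pow}) gives $v=u^k$ for some $k\in\mathbb{Z}$. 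The identity $u^{kp}=u^{en}$ combined with $u$ having infinite order forces $kp=en$, and since $\gcd(p,e)=1$ I conclude $e\mid k$, so $v\in\langle u^e\rangle=\langle w\rangle$.

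I do not anticipate a real obstacle: once Theorem \ref{eccy} is in hand, everything reduces to unpacking the definition of $\mathbf{G}_P$ together with the elementary coprimality argument above, which is exactly where the assumption $p\in P^{\perp}$ is essential.
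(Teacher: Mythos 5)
Your proposal is correct and follows essentially the same route as the paper: parts (i) and (iii) and the first equivalence in (ii) are read off from Theorem \ref{eccy} together with the definition of $\mathbf{G}_P$ and $\nu_P$, and the second equivalence uses the same counterexample $v=u^{e/p}$ in one direction and an elementary coprimality argument in the other. The only cosmetic difference is that the paper concludes the forward direction via the B\'ezout identity $1=pr+es$ applied to $v=(v^p)^r(v^e)^s$, whereas you deduce $e\mid k$ from $kp=en$; these are interchangeable.
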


\begin{proof}
(i) By Lemma \ref{rvec}.

(ii) By Theorem \ref{eccy}(iii), $\langle w\rangle$ is $\mathbf{G}_P$-closed if and only if $C_e \in {\bf G}_P$, that is, $e$ is a product of primes in $P$.

Suppose that some $p \in P^{\perp}$ divides $e$. Then (\ref{wetal}) fails for $p$ and $v = u^{\frac{e}{p}}$. The converse implication follows from \cite[Theorem 1.5]{HPSW}, but we include a proof for the sake of completeness. 

Assume that $e$ is a product of primes in $P$. Let $v \in F$ and $p \in P^{\perp}$ be such that $v^p \in \langle w\rangle$. By (\ref{pow1}), we get ${\rm{root}}(v) = {\rm{root}}(v^p) = {\rm{root}}(w)^{\pm 1} = u^{\pm 1}$, hence $v^e = w^{\pm 1} \in \langle w\rangle$. Now ${\rm gcd}(p,e) = 1$ yields $1 = pr + es$ for some $r,s \in \mathbb{Z}$ and so
$v = v^{pr+es} = (v^p)^r(v^e)^s \in \langle w\rangle$. Therefore (\ref{wetal}) holds as required.

(iii) By Theorem \ref{eccy}(iv), it suffices to note that $\nu_P(e)$ is the maximum divisor $k$ of $e$ such that $C_k \in {\bf G}_P$.
\end{proof}

By considering a single prime, we get:

\begin{corollary}
\label{cycgp}
Let $p \in \mathbb{P}$ and let $F$ be a free group. Let $w \in F\setminus \{ 1\}$, $u = {\rm{root}}(w)$ and $e = {\rm exp}(w)$. Then:
\begin{itemize}
\item[(i)]
${\rm{Cl}}_{\mathbf{G}_p}(\langle 1\rangle) = \langle 1\rangle$;
\item[(ii)]
$\langle w\rangle$ is $\mathbf{G}_p$-closed if and only if $e = p^s$ for some $s \geq 0$ if and only if
$v^q \in \langle w\rangle \Rightarrow v \in \langle w\rangle$ holds for all $v \in F$ and $q \in \{ p\}^{\perp}$;
\item[(iii)]
${\rm{Cl}}_{\mathbf{G}_p}(\langle w\rangle) = \langle u^{\nu_p(e)}\rangle$. 
\end{itemize}
\end{corollary}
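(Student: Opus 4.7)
The plan is to derive this statement directly from Corollary \ref{cycwp} by specializing $P$ to the singleton $\{p\}$. Since $\mathbf{G}_p$ is by definition $\mathbf{G}_{\{p\}}$, the pro-$\mathbf{G}_p$ topology is precisely the pro-$\mathbf{G}_{\{p\}}$ topology, so all three closure statements of Corollary \ref{cycwp} transfer once one rewrites the arithmetic conditions in the singleton case.

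For (i), I would simply invoke Corollary \ref{cycwp}(i) with $P = \{p\}$ to conclude that $\mathrm{Cl}_{\mathbf{G}_p}(\langle 1\rangle) = \langle 1\rangle$. (Alternatively, this is $\Vp = \mathbf{G}_p$ applied to Theorem \ref{eccy}(i), or just the fact that $F$ is residually $\mathbf{G}_p$ established in the proof of Lemma \ref{rvec}.)

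For (ii), the key observation is that a positive integer $e$ is a product of primes drawn from the singleton $\{p\}$ if and only if $e = p^s$ for some $s \geq 0$ (allowing the empty product when $s = 0$, so that $e = 1$). Moreover $\{p\}^{\perp}$ matches the complementary set appearing in Corollary \ref{cycwp}(ii), so the implication $v^p \in \langle w\rangle \Rightarrow v \in \langle w\rangle$ of (\ref{wetal}) becomes precisely the condition with $q \in \{p\}^{\perp}$. The triple equivalence of Corollary \ref{cycwp}(ii) thus specializes verbatim to the required equivalence.

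For (iii), I would note that $\nu_p(e)$ is, by the notation fixed just before Corollary \ref{cycwp}, simply $\nu_{\{p\}}(e)$, namely the largest power of $p$ dividing $e$. Hence Corollary \ref{cycwp}(iii) with $P = \{p\}$ gives $\mathrm{Cl}_{\mathbf{G}_p}(\langle w\rangle) = \langle u^{\nu_p(e)}\rangle$. There is no real obstacle: the result is a pure translation of the $\mathbf{G}_P$-statement to the single-prime case, so the only thing to check is that the arithmetic conditions (``$e$ is a product of primes in $P$'' and ``$k$ divides $e$ with $C_k \in \mathbf{G}_P$'') specialize correctly, which is immediate for $P = \{p\}$.
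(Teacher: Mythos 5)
Your proposal is correct and is exactly what the paper does: Corollary \ref{cycgp} is stated as an immediate specialization of Corollary \ref{cycwp} to $P = \{p\}$ (the paper offers no further proof beyond ``by considering a single prime''), and your checks that the arithmetic conditions and the set $\{p\}^{\perp}$ translate correctly are the only details needed.
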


In the language of \cite{BF}, Corollary \ref{cycgp}(ii) is equivalent to saying that given a prime $p$, a cyclic subgroup of a free group is pro-$p$ closed if and only if it is a $p$-isolated subgroup. More generally in \cite{BF} it is shown that given a prime $p$, a $p$-isolated cyclic subgroup of a right-angled Artin group is pro-$p$ closed.

The pseudovariety ${\bf O}$ of all (finite) groups of odd order is equal to ${\bf G}_{2^{\perp}}$, thus we also have:

\begin{corollary}
\label{cyco}
Let $F$ be a free group. Let $w \in F\setminus \{ 1\}$, $u = {\rm{root}}(w)$ and $e = {\rm exp}(w)$. Then:
\begin{itemize}
\item[(i)]
${\rm{Cl}}_{\mathbf{O}}(\langle 1\rangle) = \langle 1\rangle$;
\item[(ii)]
$\langle w\rangle$ is $\mathbf{O}$-closed if and only if $e$ is odd if and only if
$v^2 \in \langle w\rangle \Rightarrow v \in \langle w\rangle$ holds for every $v \in F$;
\item[(iii)]
${\rm{Cl}}_{\mathbf{O}}(\langle w\rangle) = \langle u^{\nu_{\{2\}^{\perp}}(e)}\rangle = \langle u^{\frac{e}{\nu_2(e)}}\rangle$. 
\end{itemize}
\end{corollary}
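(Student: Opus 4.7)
The plan is straightforward: the paragraph immediately preceding the statement already notes that $\mathbf{O} = \mathbf{G}_{\{2\}^{\perp}}$, so the corollary should reduce to a direct specialization of Corollary \ref{cycwp} with $P = \{2\}^{\perp}$. The only thing to check before invoking that corollary is that $\{2\}^{\perp}$ is nonempty, which is immediate since every odd prime lies in it; with this, all three parts become applicable.

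Part (i) is then just Corollary \ref{cycwp}(i) applied with $P = \{2\}^{\perp}$. For part (ii), I would invoke Corollary \ref{cycwp}(ii): the condition ``$e$ is a product of primes in $\{2\}^{\perp}$'' is exactly ``$e$ is odd'', while the universal quantifier over $p \in P^{\perp} = (\{2\}^{\perp})^{\perp} = \{2\}$ collapses to the single prime $2$, yielding the condition $v^2 \in \langle w\rangle \Rightarrow v \in \langle w\rangle$ for all $v \in F$. This gives both of the claimed equivalences.

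For part (iii), Corollary \ref{cycwp}(iii) produces the first expression $\langle u^{\nu_{\{2\}^{\perp}}(e)}\rangle$ directly. To obtain the second form, I would write $e = 2^k m$ with $m$ odd; directly from the definition of $\nu_P$ one has $\nu_2(e) = 2^k$ and $\nu_{\{2\}^{\perp}}(e) = m$, hence $\nu_{\{2\}^{\perp}}(e) = e/\nu_2(e)$, and substituting yields the stated second equality.

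No substantive obstacle arises: the statement is a verbatim translation of Corollary \ref{cycwp} to the complement of a single prime. The only nontrivial verification, and a very mild one, is the arithmetic identity $\nu_{\{2\}^{\perp}}(e) = e/\nu_2(e)$ relating the two descriptions of the odd part of $e$.
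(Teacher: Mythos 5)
Your proposal is correct and matches the paper's intent exactly: the corollary is stated as an immediate specialization of Corollary \ref{cycwp} to $P=\{2\}^{\perp}$ (using $\mathbf{O}=\mathbf{G}_{2^{\perp}}$), with the only extra step being the identity $\nu_{\{2\}^{\perp}}(e)=e/\nu_2(e)$, which you verify correctly.
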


\section{Non extension-closed pseudovarieties of solvable groups}

We start by considering one of the most important cases: nilpotent groups.

\begin{theorem}
\label{t:cyclicn}
Every cyclic subgroup of a free group is $\mathbf{N}$-closed.
\end{theorem}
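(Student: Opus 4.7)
The plan is to reduce the problem to the already-computed $\mathbf{G}_p$-closures via the fact that the pro-$\mathbf{N}$ topology refines every pro-$\mathbf{G}_p$ topology. The idea is that the $\mathbf{N}$-closure of $\langle w\rangle$ is contained in the intersection of all $\mathbf{G}_p$-closures, and a quick arithmetic calculation shows that this intersection is already $\langle w\rangle$.

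Concretely, I would first dispose of $w = 1$ using Lemma~\ref{rvec} (or Corollary~\ref{cycgp}(i) applied to any prime $p$, since $\mathbf{G}_p \subseteq \mathbf{N}$). So assume $w \neq 1$ and set $u = {\rm{root}}(w)$, $e = {\rm exp}(w)$, so $w = u^e$. For every prime $p$, finite $p$-groups are nilpotent, so $\mathbf{G}_p \subseteq \mathbf{N}$; by (\ref{inccl}) this gives
$$\mathrm{Cl}_{\mathbf{N}}(\langle w\rangle) \;\leq\; \mathrm{Cl}_{\mathbf{G}_p}(\langle w\rangle) \;=\; \langle u^{\nu_p(e)}\rangle,$$
where the equality is Corollary~\ref{cycgp}(iii). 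Intersecting over all primes,
$$\mathrm{Cl}_{\mathbf{N}}(\langle w\rangle) \;\leq\; \bigcap_{p \in \mathbb{P}} \langle u^{\nu_p(e)}\rangle.$$

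The last step is the arithmetic identification of this intersection. Working inside $\langle u\rangle \cong \mathbb{Z}$ (this is where $u \neq 1$ matters), finite intersections of subgroups $\langle u^k\rangle \cap \langle u^l\rangle = \langle u^{\mathrm{lcm}(k,l)}\rangle$ extend to
$$\bigcap_{p \in \mathbb{P}} \langle u^{\nu_p(e)}\rangle \;=\; \bigl\langle u^{\mathrm{lcm}\{\nu_p(e)\,:\,p \in \mathbb{P}\}}\bigr\rangle.$$
Only finitely many primes $p$ contribute a non-trivial factor $\nu_p(e) = p^{v_p(e)}$, and the lcm of the prime-power parts of $e$ equals $e$ itself. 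Hence the intersection equals $\langle u^e\rangle = \langle w\rangle$, so $\mathrm{Cl}_{\mathbf{N}}(\langle w\rangle) \leq \langle w\rangle$, forcing equality.

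There is no serious obstacle; the only thing to be careful about is the passage from finite lcm's to the infinite intersection, which is legitimate because $\nu_p(e) = 1$ for all but finitely many $p$, so the intersection is effectively finite. (One could alternatively invoke Corollary~\ref{retres2} to assume the ambient free group has finite rank, but this is not needed since Corollary~\ref{cycgp} is already stated for arbitrary free groups.) The conclusion for pro-$\mathbf{S}$ closedness, and more generally for any pseudovariety containing $\mathbf{N}$ such as $\mathbf{Su}$, then follows from (\ref{inccl}).
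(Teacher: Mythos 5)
Your proof is correct and follows essentially the same route as the paper: the paper invokes the equality ${\rm{Cl}}_{\mathbf{N}}(\langle w\rangle)=\bigcap_{p\in\mathbb{P}}{\rm{Cl}}_{\mathbf{G}_p}(\langle w\rangle)$ from \cite[Corollary 4.2(2)]{MSW} and then runs the same lcm/coprimality argument, whereas you only need (and only use) the one containment supplied by (\ref{inccl}), which suffices because the reverse inclusion $\langle w\rangle\leq{\rm{Cl}}_{\mathbf{N}}(\langle w\rangle)$ is automatic. One tiny caveat: Lemma \ref{rvec} does not apply directly to $\mathbf{N}$ (which is not extension-closed), but your parenthetical alternative for the case $w=1$ via Corollary \ref{cycgp}(i) together with (\ref{inccl}) is correct.
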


\begin{proof}
Let $F$ be a free group 
and let $w \in F$. By \cite[Corollary 4.2(2)]{MSW}, we have
$${\rm Cl}_{\mathbf{N}}(\langle w\rangle) = \bigcap_{p\in \P} {\rm{Cl}}_{\mathbf{G}_p}(\langle w\rangle).$$
If $w = 1$, then Corollary \ref{cycgp}(i) implies that $\langle 1\rangle$ is $\mathbf{N}$-closed, hence we may assume that $w \neq 1$. Let $u = {\rm root}(w)$ and $e = {\rm exp}(e)$. Write $e = p_1^{s_1}\ldots p_m^{s_m}$ with $p_1,\ldots,p_m$ distinct primes and $s_1,\ldots,s_m \geq 1$. 

Let $v \in {\rm Cl}_{\mathbf{N}}(\langle w\rangle)$. Then $v \in {\rm Cl}_{\mathbf{G}_{p_i}}(\langle w\rangle)$ for $i = 1,\ldots,s$. It follows from Corollary \ref{cycgp}(iii) that 
$$v \in \langle u^{\nu_{p_i}(e)} \rangle = \langle u^{p_i^{s_i}} \rangle$$
for $i = 1,\ldots,s$. Since the primes $p_i$ are distinct, we get
$$v \in \langle u^{p_1^{s_1}\ldots p_m^{s_m}} \rangle = \langle u^{e} \rangle = \langle w \rangle.$$
Therefore $\langle w\rangle$ is $\mathbf{N}$-closed.
\end{proof}

\begin{corollary}
\label{containsN}
Let $\Vp$ be a pseudovariety of finite groups containing {\bf N}. Let $H$ be a cyclic subgroup of a free group $F$. Then:
\begin{itemize}
\item[(i)] $H$ is $\Vp$-closed;
\item[(ii)] $H$ is a free factor of a $\mathbf{V}$-clopen subgroup of $F$. 
\end{itemize}
\end{corollary}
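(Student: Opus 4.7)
The plan is to handle the two parts separately, with (i) being immediate and (ii) requiring the bulk of the work.

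Part (i) follows directly from Theorem \ref{t:cyclicn} together with (\ref{inccl}): since $\mathbf{N} \subseteq \Vp$ and ${\rm Cl}_{\mathbf{N}}^F(H) = H$ by Theorem \ref{t:cyclicn}, we obtain ${\rm Cl}_{\Vp}^F(H) \subseteq {\rm Cl}_{\mathbf{N}}^F(H) = H$, so $H$ is $\Vp$-closed.

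For part (ii), the key preliminary observation is that every $\mathbf{N}$-clopen subgroup $K$ of $F$ is automatically $\Vp$-clopen: by Theorem \ref{sopen}, $K$ being $\mathbf{N}$-clopen is equivalent to $F/{\rm Core}_F(K) \in \mathbf{N}$, and $\mathbf{N} \subseteq \Vp$ then gives $F/{\rm Core}_F(K) \in \Vp$, i.e.\ $K$ is $\Vp$-clopen. Consequently, it suffices to exhibit an $\mathbf{N}$-clopen subgroup of $F$ having $H$ as a free factor. By Corollary \ref{retres2} we may assume $F$ has finite rank, and the trivial case $w = 1$ is handled by taking $K = F$. Otherwise, write $H = \langle w\rangle$ with $w = u^e$, $u = {\rm root}(w)$ and $e \geq 1$.

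The construction I have in mind exploits the fact that $F$ is residually-$p$ for every prime $p$, hence residually nilpotent, to produce a surjective homomorphism $\varphi\colon F \to N$ onto a finite nilpotent group $N$ in which $\varphi(u)$ has order exactly $e$. Setting $K = \ker\varphi$ makes $K$ an $\mathbf{N}$-clopen subgroup of $F$ containing $H$. The remaining task, and the principal obstacle, is to show that $H$ is a free factor of $K$. I would attempt this by computing an explicit Reidemeister--Schreier basis of $K$ and then performing Nielsen transformations to realise $w = u^e$ as a primitive element of $K$; note that abelianisation considerations force $\varphi$ to be chosen so that $u \notin K$, since otherwise $u^e$ can never be primitive in a subgroup containing its root $u$. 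Ensuring that $N$ and $\varphi$ can always be arranged so that this Nielsen-theoretic step succeeds — which is a combinatorial statement about how the word $u^e$ lifts through the Cayley graph of $N$ — is the delicate technical core of the proof.
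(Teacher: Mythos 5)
Part (i) of your proposal is correct and is exactly the paper's argument. Part (ii), however, is not a proof: the paper obtains (ii) in one line from Theorem \ref{ecp}(i), whose hard direction ($H$ is closed $\Rightarrow$ $H$ is a free factor of a clopen subgroup) encapsulates the Ribes--Zalesski\v\i\ / Margolis--Sapir--Weil extension theorem for inverse automata. You instead set out to build the clopen subgroup by hand as $K=\ker\varphi$ for a suitable finite nilpotent quotient $\varphi$, and then explicitly defer the decisive step --- showing that $w=u^e$ is primitive in $K$ --- to a Reidemeister--Schreier/Nielsen computation that you describe only as something you ``would attempt''. That step is the entire mathematical content of (ii), so the proposal leaves an essential gap rather than a routine verification.

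Moreover, the sufficient condition you propose for choosing $\varphi$ (that $\varphi(u)$ have order exactly $e$) cannot work as stated. Take $F=F_2$ and $w=u=[a_1,a_2]$, so $e=1$: every homomorphism $\varphi$ satisfies your condition, yet $\langle w\rangle$ is not a free factor of $F=\ker(\mathrm{trivial})$, since $w$ dies in the abelianisation while primitive elements of $F_2$ map to primitive vectors of $\mathbb{Z}^2$. (Your side remark that ``$u^e$ can never be primitive in a subgroup containing its root'' also fails when $e=1$, and when $e=1$ you cannot avoid $u\in K$ since $u=w\in H\leq K$.) The same phenomenon persists for $e\geq 2$: controlling the order of $\varphi(u)$ only guarantees $\langle u\rangle\cap K=\langle u^e\rangle$, i.e.\ a necessary isolation condition, not primitivity of $u^e$ in $K$; whether $u^e$ is primitive in $\ker\varphi$ depends delicately on $\varphi$, and establishing that a good $\varphi$ exists is precisely what \cite[Proposition 2.17]{MSW} and \cite[Corollary 3.3 and Proposition 3.4]{RZ} (i.e.\ Theorem \ref{ecp}(i)) provide. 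Your preliminary reduction is sound and even has a small advantage over the paper's citation: producing an $\mathbf{N}$-clopen $K$ (clopen via Theorem \ref{sopen}) would serve simultaneously for every $\Vp\supseteq\mathbf{N}$ without worrying about extension-closure of $\Vp$ itself. But to repair the proof you should either invoke Theorem \ref{ecp}(i) for a suitable extension-closed pseudovariety, or supply the extension-theorem argument you have postponed.
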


\begin{proof}
(i) By Theorem \ref{t:cyclicn} and (\ref{inccl}).

(ii) By Theorem \ref{ecp}(i).
\end{proof}

In particular, this applies to the pseudovariety {\bf Su} (and provides an alternative proof to Corollary \ref{t:cyclics}. Since 
$$\mathbf{Su}=\bigvee_{p\in \mathbb{P}} \mathbf{V}_p,$$
we concentrate now our efforts on the pseudovariety $\mathbf{V}_p$, for a given prime $p$ which we assume fixed throughout the rest of the paper.
In view of Corollary \ref{retres2}, we may assume that $F$ has finite rank $n$. Write $K_n = F_n^{\ab(p-1)} \unlhd F_n$. 

We can derive from the proof of \cite[Proposition 4.4]{AS} the following result:

\begin{proposition}
\label{vtog}
Let $H \leq_{f.g.} F_n$ and let $p \in \mathbb{P}$.
The following conditions are equivalent:
\begin{enumerate}[(i)]
\item[(i)] $H$ is $\mathbf{V}_p$-closed in $F_n$;
\item[(ii)] $H \cap K_n$ is $\mathbf{G}_p$-closed in $K_n$.
\end{enumerate}
\end{proposition}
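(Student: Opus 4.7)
The plan is to establish both implications directly, by constructing, in each topology, open subgroups that separate $w$ from $H$ (or from $H \cap K_n$). The crucial observation underpinning both directions is that if $N \unlhd F_n$ satisfies $F_n/N \in \mathbf{V}_p$, with $P/N$ the (normal) Sylow $p$-subgroup of $F_n/N$, then $F_n/P \in \ab(p-1)$, so by the universal property of $K_n = F_n^{\ab(p-1)}$ we must have $K_n \leq P$. Consequently $K_n/(N \cap K_n)$ embeds in $P/N$ and so has $p$-power order; thus $N \cap K_n$ is normal in $K_n$ of $p$-power index, i.e.\ $\mathbf{G}_p$-open in $K_n$.

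For $(i) \Rightarrow (ii)$, I would take $w \in K_n \setminus (H \cap K_n)$. Since $w \notin H$ and $H$ is $\mathbf{V}_p$-closed, there is $N \unlhd F_n$ with $F_n/N \in \mathbf{V}_p$ and $w \notin HN$. By the observation above, $(H \cap K_n)(N \cap K_n)$ is a $\mathbf{G}_p$-open subgroup of $K_n$ containing $H \cap K_n$; if $w$ lay in it, writing $w = h'n'$ with $h' \in H \cap K_n$ and $n' \in N \cap K_n$ would place $w$ in $HN$, a contradiction. Hence $H \cap K_n$ is $\mathbf{G}_p$-closed in $K_n$.

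For $(ii) \Rightarrow (i)$, I take $w \in F_n \setminus H$ and split on whether $w \in HK_n$. If $w \notin HK_n$, then $HK_n$ is a subgroup of $F_n$ (as $K_n \unlhd F_n$) whose quotient is a quotient of $F_n/K_n \cong C_{p-1}^n \in \ab(p-1) \subseteq \mathbf{V}_p$, so $HK_n$ itself is $\mathbf{V}_p$-open and separates $w$ from $H$. Otherwise $w = hk$ with $h \in H$ and $k \in K_n \setminus (H \cap K_n)$, and the hypothesis provides $M \unlhd K_n$ of $p$-power index with $k \notin (H \cap K_n)M$. Set $M' = \Core_{F_n}(M)$, a finite intersection of $F_n$-conjugates of $M$, each normal in $K_n$ with $K_n$-quotient isomorphic to $K_n/M$. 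Then $K_n/M'$ is a finite $p$-group, and because $[F_n:K_n] = (p-1)^n$ is coprime to $p$, $K_n/M'$ is the unique (normal) Sylow $p$-subgroup of $F_n/M'$, placing $F_n/M' \in \mathbf{V}_p$. The identity $HM' \cap K_n = (H \cap K_n)M'$ (which uses only $M' \leq K_n$) then gives $w \in HM' \Leftrightarrow k \in (H \cap K_n)M'$, and the latter fails since $M' \leq M$. Hence $HM'$ is a $\mathbf{V}_p$-open subgroup of $F_n$ containing $H$ but not $w$.

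The main difficulty is the lifting step in $(ii) \Rightarrow (i)$: one must simultaneously ensure that $F_n/M'$ lies in $\mathbf{V}_p$ with the correct Sylow decomposition \emph{and} that the $\mathbf{G}_p$-separation obtained inside $K_n$ survives passage to $F_n$. Both points rest on the coprimality of $[K_n:M']$ (a $p$-power) with $[F_n:K_n] = (p-1)^n$, together with the elementary identity $HM' \cap K_n = (H \cap K_n)M'$.
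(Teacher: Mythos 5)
Your proof is correct, but it takes a genuinely different route from the paper. The paper derives the equivalence as a formal consequence of two identities extracted from the proof of Proposition~4.4 of Auinger--Steinberg: writing $H=\bigcup_{i}(H\cap K_n)t_i$, one has $\mathrm{Cl}_{\mathbf{V}_p}^{F_n}(H)=\bigcup_i \mathrm{Cl}_{\mathbf{V}_p}^{F_n}(H\cap K_n)t_i$ and $\mathrm{Cl}_{\mathbf{V}_p}^{F_n}(H\cap K_n)=\mathrm{Cl}_{\mathbf{G}_p}^{K_n}(H\cap K_n)$, after which both implications are immediate. You instead give a self-contained separation argument: for (i)$\Rightarrow$(ii) you intersect a separating $\mathbf{V}_p$-open normal subgroup $N$ with $K_n$ and use that $K_n$ lies inside the preimage of the Sylow $p$-subgroup of $F_n/N$ (by the universal property of the verbal subgroup $F_n^{\ab(p-1)}$), so that $N\cap K_n$ is $\mathbf{G}_p$-open in $K_n$; for (ii)$\Rightarrow$(i) you pass to the core $M'=\Core_{F_n}(M)$, use coprimality of $[K_n:M']$ with $[F_n:K_n]=(p-1)^n$ to place $F_n/M'$ in $\mathbf{V}_p$, and transfer the separation via the Dedekind identity $HM'\cap K_n=(H\cap K_n)M'$. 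All of these steps check out (in particular $\ab(p-1)\subseteq\mathbf{V}_p$ handles the case $w\notin HK_n$, and $k\notin(H\cap K_n)M\supseteq(H\cap K_n)M'$ handles the other case). What your approach buys is independence from the external reference; what it does not deliver is the explicit closure formula (\ref{vtog2}), which the paper's route produces as a byproduct and which is reused in the proof of Corollary~\ref{clv}, so your argument could replace the proof of the proposition itself but not its role in the later computation of the $\mathbf{V}_p$-closure.
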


\begin{proof}
Write $F = F_n$ and $K = K_n$.
Since $[F:K] < \infty$ and $K \unlhd F$, the second isomorphism theorem yields $[H:H\cap K] < \infty$ as well. Hence we may write
\begin{equation}
\label{vtog1}
H = \bigcup_{i=0}^m (H\cap K)t_i
\end{equation}
for some $t_0,\ldots,t_m \in F$ with $t_0 = 1$. 

By the proof of \cite[Proposition 4.4]{AS}, we know that
$$\textrm{Cl}_{\mathbf{V}_p}^F(H) = \bigcup_{i=0}^m (\textrm{Cl}_{\mathbf{V}_p}^F(H\cap K))t_i.$$
The authors prove this statement using the concept of a Schreier transversal, but we do not need to worry about that because Schreier transversals do always exist. They also prove that
$$\textrm{Cl}_{\mathbf{V}_p}^F(H\cap K) = \textrm{Cl}_{\mathbf{G}_p}^K(H\cap K).$$
It follows that 
\begin{equation}
\label{vtog2}
\textrm{Cl}_{\mathbf{V}_p}^F(H) = \bigcup_{i=0}^m (\textrm{Cl}_{\mathbf{G}_p}^K(H\cap K))t_i.
\end{equation}
Now we prove the stated equivalence.

Assume first that $H \cap K$ is $\mathbf{G}_p$-closed in $K$. In other words $\textrm{Cl}_{\mathbf{G}_p}^K(H\cap K) = H\cap K$, and so it follows from (\ref{vtog1}) and (\ref{vtog2}) that
$$\textrm{Cl}_{\mathbf{V}_p}^F(H) = \bigcup_{i=0}^m (\textrm{Cl}_{\mathbf{G}_p}^K(H\cap K))t_i = \bigcup_{i=0}^m (H\cap K)t_i = H.$$
Thus $H$ is $\mathbf{V}_p$-closed in $F$.

Conversely, assume that $H$ is $\mathbf{V}_p$-closed in $F$, that is, $\textrm{Cl}_{\mathbf{V}_p}^F(H) = H$. Then in view of (\ref{vtog2}) we get
$$H = \bigcup_{i=0}^m (\textrm{Cl}_{\mathbf{G}_p}^K(H\cap K))t_i.$$
Since $t_0 = 1$, this implies $\textrm{Cl}_{\mathbf{G}_p}^K(H\cap K) \subseteq H$ and consequently $\textrm{Cl}_{\mathbf{G}_p}^K(H\cap K) \subseteq H\cap K$. Thus $\textrm{Cl}_{\mathbf{G}_p}^K(H\cap K) = H\cap K$ and $H \cap K$ is $\mathbf{G}_p$-closed in $K$ as required.
\end{proof}

In view of Corollary \ref{retres2}, $\langle1\rangle$ is $\mathbf{V}_p$-closed in $F_n$.
For a nontrivial cyclic subgroup of $F_n$, more work is needed in order to determine its $\mathbf{V}_p$-closure.
With respect to the basis $A = \{ a_1,\ldots,a_n\}$ of $F_n$, we recover the homomorphisms $f_{a_i}:F_n \to \mathbb{Z}$ defined in the beginning of Section \ref{prel}. 
Given $w\in F_n$, write
$$h_w = \frac{p-1}{{\rm gcd}(p-1,f_{a_1}(w),\ldots,f_{a_n}(w))}.$$

\begin{lemma}
\label{pk}
Let $w \in F_n$. Then
$h_w = {\rm min}\{ r \geq 1 \mid w^r \in K_n\}$.
\end{lemma}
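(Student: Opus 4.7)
The plan is to compute the image of powers of $w$ in the quotient $F_n/K_n$, which was identified in Section~\ref{prel} with the free object $C_{p-1}^n$ of $\ab(p-1)$ on $A$. Under this identification the canonical projection $F_n \to F_n/K_n$ factors through the abelianization $F_n \to \mathbb{Z}^n$, $w \mapsto (f_{a_1}(w), \ldots, f_{a_n}(w))$, followed by componentwise reduction modulo $p-1$.

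First I would rewrite the condition $w^r \in K_n$ as the triviality of the image of $w^r$ in $C_{p-1}^n$, which amounts to the system of congruences
$$r f_{a_i}(w) \equiv 0 \pmod{p-1} \qquad (i = 1, \ldots, n).$$
Since the gcd of a family of integers is a $\mathbb{Z}$-linear combination of them, and each of them is in turn a multiple of that gcd, the whole system is equivalent to the single divisibility
$$(p-1) \;\Big|\; r \cdot \gcd(f_{a_1}(w), \ldots, f_{a_n}(w)).$$

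Next I would carry out an elementary number-theoretic computation. Setting $g = \gcd(f_{a_1}(w), \ldots, f_{a_n}(w))$ and $d = \gcd(p-1, g)$, and writing $p-1 = dk$ and $g = dm$ with $\gcd(k,m) = 1$, the divisibility $(p-1) \mid rg$ becomes $k \mid rm$, hence (by coprimality) $k \mid r$. The smallest positive $r$ with this property is $r = k = (p-1)/d$, which is precisely $h_w$.

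The proof reduces to a gcd manipulation, so there is essentially no obstacle; the only point to monitor is the degenerate case where all $f_{a_i}(w)$ vanish, in which $w$ already lies in $[F_n, F_n] \subseteq K_n$ and one verifies directly that both $h_w$ and $\min\{r \geq 1 \mid w^r \in K_n\}$ equal $1$, under the natural convention $\gcd(p-1, 0, \ldots, 0) = p-1$.
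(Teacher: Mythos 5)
Your proposal is correct and follows essentially the same route as the paper: both reduce $w^r \in K_n$ to the congruences $r f_{a_i}(w) \equiv 0 \pmod{p-1}$ via the identification $F_n/K_n \cong C_{p-1}^n$, collapse these to the single condition $(p-1) \mid r\gcd(f_{a_1}(w),\ldots,f_{a_n}(w))$, and conclude that this holds exactly when $h_w \mid r$. Your explicit gcd bookkeeping and the check of the degenerate case $f_{a_1}(w)=\cdots=f_{a_n}(w)=0$ are just slightly more detailed versions of the chain of equivalences the paper writes down.
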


\begin{proof}
Since $K_n = F_n^{\ab(p-1)} = \langle [u,v],u^{p-1} \mid u,v \in F_n \rangle$, we have 
$$\begin{array}{lll}
w^r \in K_n&
\Leftrightarrow& \forall i \in \{ 1,\ldots,n\}\, p-1 \mid f_{a_i}(w^r)\\ &&\\
&\Leftrightarrow&p-1 \mid {\rm{gcd}}(f_{a_1}(w^r),\ldots,f_{a_n}(w^r)) \\
&\Leftrightarrow &p-1 \mid r\,{\rm{gcd}}(f_{a_1}(w),\ldots,f_{a_n}(w))\\ &&\\
&\Leftrightarrow&\frac{p-1}{{\rm{gcd}}(p-1,f_{a_1}(w),\ldots,f_{a_n}(w))} \mid r\\
&\Leftrightarrow& h_w \mid r
\end{array}$$
and the claim follows.
\end{proof}

Given $w \in K_n$ nonempty, we now characterise the root and the exponent in $K_n$ of $w$ in terms of the root and the exponent in $F_n$ of $w$.

\begin{lemma}\label{rek}
Let $w \in K_n \leq F_n$ be nonempty. Let $u$ and $e$ denote respectively the root and exponent of $w$ in $F_n$. Let $u_K$ and $e_K$ denote respectively the root and exponent of $w$ in $K_n$. 
Then:
\begin{enumerate}[(i)]
\item $u_K = u^{h_u}$;
\item $e_K = \frac{e}{h_u}$;
\item $u_K^{\nu_p(e_K)} = u^{h_u\nu_p(e)}$.
\end{enumerate}
\end{lemma}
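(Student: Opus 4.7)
I would prove (i) and (ii) simultaneously by a maximality argument combining Lemma \ref{pk} with (\ref{pow}), and then deduce (iii) as a short arithmetic consequence of the defining formula for $h_u$.

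For (i) and (ii): from the decomposition $w = u_K^{e_K}$ with $e_K \geq 1$ and $u_K \in F_n \setminus \{1\}$, applying (\ref{pow}) in $F_n$ gives $u_K = u^k$ for some $k \in \mathbb{Z} \setminus \{0\}$. Since $u_K \in K_n$, Lemma \ref{pk} forces $h_u \mid k$, say $k = h_u t$ with $t \in \mathbb{Z} \setminus \{0\}$. Comparing $u^{k e_K} = u_K^{e_K} = w = u^e$ in the free group $F_n$ yields $k e_K = e$, so both $k$ and $t$ are positive. On the other hand Lemma \ref{pk} also gives $u^{h_u} \in K_n$, and the identity $(u^{h_u})^{t e_K} = u^{k e_K} = w$ exhibits $w$ as a power in $K_n$ with exponent $t e_K$; by maximality of $e_K$ we get $t e_K \leq e_K$, hence $t = 1$. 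This yields $u_K = u^{h_u}$ and $e_K = e/h_u$ at the same time.

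For (iii): substituting the formulas from (i) and (ii) into the left-hand side, the claim reduces to the arithmetic identity
\[
h_u \, \nu_p(e/h_u) = h_u \, \nu_p(e),
\]
i.e.\ to $\nu_p(e/h_u) = \nu_p(e)$, which is equivalent to $p \nmid h_u$. But by definition
\[
h_u = \frac{p-1}{\gcd(p-1,\, f_{a_1}(u),\ldots,f_{a_n}(u))}
\]
is a positive divisor of $p-1$, and $\gcd(p-1,p) = 1$, so $p$ does not divide $h_u$.

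\textbf{Main obstacle.} The only real content is in the maximality step for (i) and (ii): one has to use Lemma \ref{pk} in both directions, first to constrain $u_K$ to be a power $u^k$ with $h_u \mid k$, and then to produce the competing decomposition $w = (u^{h_u})^{e/h_u}$ inside $K_n$ which pins the exponent down to exactly $e/h_u$. Once (i) and (ii) are in hand, (iii) trivialises, since it amounts to the elementary fact that any divisor of $p-1$ is coprime to $p$.
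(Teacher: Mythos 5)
Your proof is correct and follows essentially the same route as the paper: both write $u_K = u^q$ via (\ref{pow}), pin down $q = h_u$ by combining Lemma \ref{pk} with the maximality of the exponent, and deduce (iii) from the fact that $h_u$ divides $p-1$ and is therefore coprime to $p$. The only cosmetic difference is that you invoke the equivalence $u^k \in K_n \Leftrightarrow h_u \mid k$, which comes from the \emph{proof} of Lemma \ref{pk} rather than its bare statement --- but the paper makes exactly the same appeal.
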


\begin{proof}
Since $u_K^{e_K} = w$, it follows from (\ref{pow}) that $u_K = u^q$ for some positive integer $q$. 
Moreover, $e_K = \frac{e}{q}$. Now $u_K \in K_n$ and we must have in fact
\begin{equation}
\label{rek1}
q = \min\{ r \geq 1 \mid u^r \in K_n\mbox{ and } r\mid e\}.
\end{equation}
But $u^e = w \in K_n$, hence it follows from the proof of Lemma \ref{pk} that $h_u \mid e$. Now Lemma \ref{pk} and (\ref{rek1}) together yield $q = h_u$. Thus (i) and (ii) hold.

Finally, to prove (iii) we remark that $h_u \mid p-1$ yields
$$u_K^{\nu_p(e_K)} = (u^{h_u})^{\nu_p(\frac{e}{h_u})} = (u^{h_u})^{\nu_p(e)}
= u^{h_u\nu_p(e)}.$$
\end{proof}

We now prove the following criterion for a cyclic subgroup of $F_n$ to be $\mathbf{V}_p$-closed.

\begin{proposition}
\label{cyv}
Let $w\in F_n \setminus \{ 1\}$. Let $u = {\rm root}(w)$ and $e = {\rm exp}(w)$.
Then $h_w \mid h_u$ and the following conditions are equivalent:
\begin{enumerate}[(i)]
\item $\langle w\rangle$ is $\mathbf{V}_p$-closed in $F_n$;
\item $e = \frac{h_u}{h_w} p^s$ for some $s \geq 0$.
\end{enumerate}
\end{proposition}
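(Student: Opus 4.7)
The plan is to use Proposition \ref{vtog} to reduce the $\mathbf{V}_p$-closure question in $F_n$ to a $\mathbf{G}_p$-closure question in $K_n$, and then to apply Corollary \ref{cycgp} in $K_n$ after translating roots and exponents from $F_n$ using Lemma \ref{rek}.

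First I would verify the divisibility $h_w \mid h_u$. Since $u^{h_u} \in K_n$ by Lemma \ref{pk} and $w = u^e$, we get $w^{h_u} = (u^{h_u})^e \in K_n$, so Lemma \ref{pk} applied to $w$ forces $h_w \mid h_u$.

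Next I would identify $\langle w\rangle \cap K_n$ as a cyclic subgroup of $K_n$ and compute its exponent in $K_n$. Since $F_n$ is torsion-free, $w$ has infinite order, and the set $\{r \in \mathbb{Z} : w^r \in K_n\}$ is a subgroup of $\mathbb{Z}$ which, by Lemma \ref{pk}, equals $h_w\mathbb{Z}$. Hence $\langle w\rangle \cap K_n = \langle w^{h_w}\rangle$. Setting $w' = w^{h_w} = u^{eh_w}$, the root of $w'$ in $F_n$ is $u$ and its exponent in $F_n$ is $eh_w$; by Lemma \ref{rek}(ii) the exponent of $w'$ in $K_n$ equals $\frac{eh_w}{h_u}$, which is automatically a positive integer thanks to $w' \in K_n$.

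Combining the ingredients, Proposition \ref{vtog} gives the equivalence between $\langle w\rangle$ being $\mathbf{V}_p$-closed in $F_n$ and $\langle w'\rangle = \langle w\rangle \cap K_n$ being $\mathbf{G}_p$-closed in $K_n$, and Corollary \ref{cycgp}(ii) applied in the free group $K_n$ reformulates the latter as $\frac{eh_w}{h_u} = p^s$ for some $s \geq 0$, i.e.\ $e = \frac{h_u}{h_w}p^s$. The only delicate point is the correct application of Lemma \ref{rek} to $w'$ rather than to $w$ itself (which need not lie in $K_n$), together with keeping straight the distinction between roots and exponents computed in $F_n$ versus in $K_n$; once that translation is made, everything else is bookkeeping.
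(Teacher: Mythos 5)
Your proposal is correct and follows essentially the same route as the paper: reduce to $\mathbf{G}_p$-closedness of $\langle w\rangle\cap K_n=\langle w^{h_w}\rangle$ in $K_n$ via Proposition \ref{vtog}, then translate exponents with Lemma \ref{rek}(ii) and apply Corollary \ref{cycgp}(ii). The only (harmless) variation is your derivation of $h_w\mid h_u$ from Lemma \ref{pk} via $w^{h_u}=(u^{h_u})^e\in K_n$, where the paper instead compares the gcd's of the $f_{a_i}$-values directly.
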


\begin{proof}
Write $F = F_n$ and $K = K_n$. We have
$$h_w \mid h_u \Leftrightarrow {\rm{gcd}}(p-1,f_{a_1}(u),\ldots, f_{a_n}(u)) \mid {\rm{gcd}}(p-1,f_{a_1}(w),\ldots, f_{a_n}(w)).$$
Since $f_{a_i}(w) = f_{a_i}(u^e) = ef_{a_i}(u)$ for $i = 1,\ldots,n$, it follows that $h_w \mid h_u$.

Let $H = \langle w\rangle$. By Proposition \ref{vtog}, $H$ is $\mathbf{V}_p$-closed in $F$ if and only if $H \cap K$ is $\mathbf{G}_p$-closed in $K$. 

Since $H \cap K$ is a subgroup of a cyclic group, it is itself cyclic, and it follows from Lemma \ref{pk} that 
\begin{equation}
\label{cyv2}
H \cap K = \langle w^{h_w} \rangle.
\end{equation}

Now we have to find out when $H\cap K$ is $\mathbf{G}_p$-closed in $K$. By Corollary \ref{cycgp}(ii), this is equivalent to having 
$${\rm exp}_K(w^{h_w}) = p^s$$
for some $s \geq 0$. 
By Lemma \ref{rek}(ii), this is equivalent to having
$${\rm exp}(w^{h_w}) = h_up^s$$
for some $s \geq 0$. Since ${\rm exp}(w^{h_w}) = h_w{\rm exp}(w)$ by (\ref{pow1}), the result follows.
\end{proof}

Finally we determine the $\mathbf{V}_p$-closure of a nontrivial cyclic subgroup of $F_n$.

\begin{corollary}
\label{clv}
Let $w \in F_n \setminus \{ 1\}$. Let $u = {\rm root}(w)$ and $e = {\rm exp}(w)$. Let $d_p = {\rm gcd}(e,h_u)\nu_p(e)$.
Then 
${\rm{Cl}}_{\mathbf{V}_p}(\langle w\rangle) = 
\langle u^{d_p}\rangle$.
\end{corollary}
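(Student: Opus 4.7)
The plan is to build directly on Proposition \ref{vtog} and the explicit formula (\ref{vtog2}) established in its proof, namely
$$\mathrm{Cl}_{\mathbf{V}_p}^{F_n}(\langle w\rangle) = \bigcup_{i=0}^{h_w-1} \bigl(\mathrm{Cl}_{\mathbf{G}_p}^{K_n}(\langle w\rangle \cap K_n)\bigr)\, t_i,$$
so that the whole problem reduces to (a) computing the $\mathbf{G}_p$-closure inside $K_n$ of the intersection $\langle w\rangle \cap K_n$, and (b) assembling the resulting cosets back into a subgroup of $F_n$. For (a), I would invoke (\ref{cyv2}) to identify $\langle w\rangle \cap K_n = \langle w^{h_w}\rangle$, which is a cyclic subgroup of $K_n$ generated by an element that does lie in $K_n$ (by definition of $h_w$).

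Next I would apply Lemma \ref{rek} to the element $w^{h_w} \in K_n$. By (\ref{pow1}), its root and exponent in $F_n$ are $u$ and $h_w e$, respectively, so the lemma gives root $u^{h_u}$ and exponent $h_w e / h_u$ in $K_n$; moreover, since $h_w$ divides $p-1$ and is therefore coprime to $p$, part (iii) of that lemma simplifies to $u_K^{\nu_p(e_K)} = u^{h_u\nu_p(e)}$. Corollary \ref{cycgp}(iii), applied in $K_n$, then yields
$$\mathrm{Cl}_{\mathbf{G}_p}^{K_n}(\langle w^{h_w}\rangle) = \langle u^{h_u\nu_p(e)}\rangle.$$

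For step (b), I would choose the coset representatives $t_i = w^i$ for $i = 0, 1, \ldots, h_w - 1$ (which is legitimate since $[\langle w\rangle : \langle w\rangle \cap K_n] = h_w$), and substitute into the formula above. All elements in sight lie in the abelian subgroup $\langle u\rangle$, so the resulting union of cosets equals the subgroup $\langle u^{h_u\nu_p(e)}, w\rangle = \langle u^{h_u\nu_p(e)}, u^e\rangle$, which as a subgroup of the infinite cyclic group $\langle u\rangle$ is nothing but $\langle u^{\gcd(h_u\nu_p(e),\, e)}\rangle$.

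The final step is the arithmetic identity $\gcd(h_u\nu_p(e),\, e) = \gcd(e, h_u)\,\nu_p(e) = d_p$. Writing $e = p^a m$ with $\gcd(m,p) = 1$ and using $h_u \mid p-1$, hence $\gcd(h_u, p) = 1$, both sides reduce to $p^a \gcd(h_u, m)$, giving the equality. I expect the only real subtlety is keeping the two ``levels'' (root/exponent in $F_n$ versus in $K_n$) straight when invoking Lemma \ref{rek} on $w^{h_w}$ rather than on $w$ itself, and being careful that the cosets $\langle u^{h_u\nu_p(e)}\rangle w^i$ do assemble into a single cyclic subgroup of $\langle u\rangle$; the arithmetic identity at the end, while elementary, is where the particular shape of $d_p$ emerges and hence is the one bookkeeping step that deserves explicit verification.
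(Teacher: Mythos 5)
Your reduction is exactly the paper's: both arguments pass through (\ref{vtog2}) with coset representatives $t_i = w^i$, identify $\langle w\rangle\cap K_n = \langle w^{h_w}\rangle$ via (\ref{cyv2}), and compute ${\rm Cl}_{\mathbf{G}_p}^{K_n}(\langle w^{h_w}\rangle) = \langle u^{h_u\nu_p(e)}\rangle$ from Lemma \ref{rek}(iii) and Corollary \ref{cycgp}(iii), using $\gcd(h_w,p)=1$ to discard the factor $h_w$ inside $\nu_p$. The only divergence is the final assembly step. The paper establishes $\bigcup_{i=0}^{h_w-1}\langle u^{h_u\nu_p(e)}\rangle w^i = \langle u^{d_p}\rangle$ by an explicit double inclusion with B\'ezout coefficients; you instead assert that the union equals $\langle u^{h_u\nu_p(e)}, u^e\rangle$ and then compute $\gcd(h_u\nu_p(e),e)=d_p$ (that arithmetic identity is correct). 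But the reason you give for the union being that subgroup --- that everything lies in the abelian group $\langle u\rangle$ --- does not suffice: a union of $h$ cosets $Nw^0,\dots,Nw^{h-1}$ in an infinite cyclic group need not exhaust $\langle N,w\rangle$ (take $N=\langle u^4\rangle$, $w=u$, $h=2$). Two easy repairs are available: (a) the left-hand side of (\ref{vtog2}) is the closure of a subgroup, hence itself a subgroup; since it contains $N=\langle u^{h_u\nu_p(e)}\rangle$ and $w$ and is contained in $\langle N,w\rangle$, it must equal $\langle N,w\rangle$; or (b) check directly that the order of $w=u^e$ modulo $N$ is exactly $h_w$, i.e.\ that $h_w = h_u/\gcd(h_u,e)$, which follows from $f_{a_i}(w)=ef_{a_i}(u)$ and the definition of $h_w$, so the $h_w$ listed cosets are precisely all cosets of $N$ in $\langle N,w\rangle$. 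With either observation inserted, your proof is complete and is, if anything, a cleaner route to $d_p$ than the paper's B\'ezout computation.
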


\begin{proof}
Write $F = F_n$ and $K = K_n$. Write also $p^s = \nu_p(e)$
and $h = h_w$.
By (\ref{cyv2}), we have $H\cap K = \langle w^h\rangle$. Since
$$\langle w\rangle = \bigcup_{i=0}^{h-1} \langle w^h\rangle w^i,$$
it follows from (\ref{vtog2}) that
\begin{equation}
\label{clv1}
{\rm{Cl}}_{\mathbf{V}_p}^F(\langle w\rangle) = \bigcup_{i=0}^{h-1} ({\rm{Cl}}_{\mathbf{G}_p}^K(\langle w^h\rangle))w^i.
\end{equation}
Now Corollary \ref{cycgp}(iii) yields
$${\rm{Cl}}_{\mathbf{G}_p}^K(\langle w^h\rangle) = \langle {\rm root}_K(w^h)^{\nu_p({\rm exp}_K(w^h))}\rangle$$
and so
$${\rm{Cl}}_{\mathbf{G}_p}^K(\langle w^h\rangle) = \langle {\rm{root}}(w^h)^{h_{{\rm{root}}(w^h)}\nu_p({\rm exp}(w^h))}\rangle$$
by Lemma \ref{rek}(iii). 
In view of (\ref{pow1}), we get
$${\rm{Cl}}_{\mathbf{G}_p}^K(\langle w^h\rangle) = \langle u^{h_u\nu_p(he)}\rangle.$$
Since $h \mid p-1$, we get
$${\rm{Cl}}_{\mathbf{G}_p}^K(\langle w^h\rangle) = \langle u^{h_u\nu_p(e)}\rangle = \langle u^{h_up^s}\rangle$$
and it follows from (\ref{clv1}) that
$${\rm{Cl}}_{\mathbf{V}_p}^F(\langle w\rangle) = \bigcup_{i=0}^{h-1}\langle u^{h_up^s}\rangle w^i.$$
Thus we only need to show that
\begin{equation}
\label{clv2}
\bigcup_{i=0}^{h-1}\langle u^{h_up^s}\rangle w^i = \langle u^{d_p}\rangle.
\end{equation}

First, we show that 
\begin{equation}
\label{clv3}
\langle w^{h}\rangle \leq \langle u^{h_up^s}\rangle.
\end{equation}

Indeed, we have $w^{h} = u^{eh}$, so it suffices to show that $h_up^s$ divides $eh$. Clearly, 
$$\nu_p(h_up^s) = p^s = \nu_p(e) = \nu_p(eh).$$
Hence we only need to show that $h_u$ divides $eh$. 
Since $u^{eh}=w^h\in K$ and $u^{h_u}\in K$ by Lemma \ref{pk}, then also $u^{{\rm gcd}(eh,h_u)} \in K$. Now it follows from Lemma \ref{pk} that $h_u={\rm gcd}(eh,h_u)$, hence $h_u$ divides $eh$. 
Therefore (\ref{clv3}) holds.

Since $d_p$ divides $h_up^s$, we get $u^{h_up^s} \in \langle u^{d_p}\rangle$ and consequently $\langle u^{h_up^s}\rangle \leq \langle u^{d_p}\rangle$. On the other hand, 
$d_p$ also divides $e$ and so $w = u^e \in \langle u^{d_p}\rangle$. Therefore
$$\bigcup_{i=0}^{h-1}\langle u^{h_up^s}\rangle w^i \leq \langle u^{d_p}\rangle.$$

Conversely, let $v \in \langle u^{d_p}\rangle$. Then $v = u^{kd_p}$ for some $k \in \mathbb{Z}$. Let $x,y \in \mathbb{Z}$ be such that ${\rm{gcd}}(e,h_u) = xe + yh_u$. There exist some $z \in \mathbb{Z}$ and $i \in \{ 0,\ldots,h-1\}$ such that $kxp^s = zh + i$.
In view of (\ref{clv3}), we get
$$v = u^{kd_p} = u^{kxep^s + kyh_up^s} = (u^{h_up^s})^{ky}w^{kxp^s} = (u^{h_up^s})^{ky}(w^{h})^zw^i
\in \langle u^{h_up^s}\rangle w^i$$
and so $$\langle u^{d_p} \rangle \leq \bigcup_{i=0}^{h-1}\langle u^{h_up^s}\rangle w^{i}.$$ This establishes (\ref{clv2}) and we are done.
\end{proof}

\section*{Acknowledgements}

The first author acknowledges support from the Centre of Mathematics of the University of Porto, which is financed by national funds through the Funda\c c\~ao para a Ci\^encia e a Tecnologia, I.P., under the project with references UIDB/00144/2020 and  UIDP/00144/2020.

The second author acknowledges support from the Centre of Mathematics of the University of Porto, which is financed by national funds through the Funda\c c\~ao para a Ci\^encia e a Tecnologia, I.P., under the project with reference UIDB/00144/2020. 

The third author was supported by the Engineering and Physical Sciences Research Council, grant number 
EP/T017619/1.

\vspace{1cm}

{\sc Claude Marion, Centro de
Matem\'{a}tica, Faculdade de Ci\^{e}ncias, Universidade do
Porto, R. Campo Alegre 687, 4169-007 Porto, Portugal}

{\em E-mail address}: claude.marion@fc.up.pt

\bigskip

{\sc Pedro V. Silva, Centro de
Matem\'{a}tica, Faculdade de Ci\^{e}ncias, Universidade do
Porto, R. Campo Alegre 687, 4169-007 Porto, Portugal}

{\em E-mail address}: pvsilva@fc.up.pt

\bigskip

{\sc Gareth Tracey, Mathematics Institute, University of Warwick, Coventry CV4 7AL, U.K.}

{\em E-mail address}: Gareth.Tracey@warwick.ac.uk

\end{document}